\newtheorem*{maintheorem*}{Main Theorem}
\newtheorem{theorem}{Theorem}[section]
\newtheorem{lemma}[theorem]{Lemma}
\newtheorem{corollary}[theorem]{Corollary}
\newtheorem{proposition}[theorem]{Proposition}
\theoremstyle{definition}
\newtheorem{definition}[theorem]{Definition}
\theoremstyle{remark}
\newtheorem{remark}[theorem]{Remark}
\numberwithin{equation}{section}
\newcommand{\F}{\mathbb {F}}
\newcommand{\R}{\mathbb {R}}
\newcommand{\Z}{\mathbb {Z}}
\newcommand{\dd}{\partial}
\newcommand{\alg}{\mathcal{A}}
\newcommand{\rep}{\mathcal{R}ep}
\newcommand{\wt}{\widetilde}
\newcommand{\aug}{\mathcal{A}ug}
\newcommand{\im}{\mathrm{im}\;}
\newcommand{\Hom}{\mathrm{Hom}}
\newcommand{\Mat}{\mathrm{Mat}}
\newcommand{\Span}{\mathrm{Span}}
\newcommand{\GL}{\mathrm{GL}}
\newcommand{\Aut}{\mathrm{Aut}}
\def\wt#1{\widetilde{#1}}
\title[The homotopy cardinality of the representation category]{The homotopy cardinality of the representation category for a Legendrian Knot}
\author{Justin Murray}
\address{Justin Murray, Department of Mathematics, Louisiana State University}
\email{jmurr24@lsu.edu}
\begin{document}
\maketitle
\begin{abstract}
    Given a Legendrian knot in $(\mathbb{R}^3, \ker(dz-ydx))$ one can assign a combinatorial invariants called ruling polynomials. These invariants have been shown to recover not only a (normalized) count of augmentations but are also closely related to a categorical count of augmentations in the form of the homotopy cardinality of the augmentation category. In this article, we prove that that the homotopy cardinality of the $n$-dimensional representation category is a multiple of the $n$-colored ruling polynomial. Along the way, we establish that two $n$-dimensional representations are equivalent in the representation category if they are ``conjugate homotopic''. We also provide some applications to Lagrangian concordance.
\end{abstract}
\section{Introduction}

Given a Legendrian knot $\Lambda$ in $\R^3$ with its standard contact structure given by $\ker(dz-ydx)$, one can assign to it a differential graded algebra (DGA), $(\alg_\Lambda,\dd_\Lambda)$, called the Chekanov-Eliashberg DGA. This DGA is generated by the Reeb chords of $\Lambda$ and whose differential counts  $J$-holomorphic disks with boundary on $\R\times \Lambda$ in the symplectization of $\R^3$. 
The stable-tame isomorphism class of $(\alg_\Lambda,\dd_\Lambda)$ is a powerful invariant of $\Lambda$ that can distinguish Legendrian knots, and be used to study Lagrangian cobordisms between Legendrians. Unfortunately, these tend to be hard to compute. What one can do to remedy this is to linearize the DGA by an augmentation $\epsilon:(\alg,\dd)\to(\F,0)$ to get the linearized contact homology $LCH^\epsilon_*$, which is easier to compute in general (but depends on $\epsilon$). For an in-depth discussion on Legendrian contact homology, see \cite{EN}.

Interestingly, augmentations have close ties to a purely combinatorial invariant called $m$-graded normal rulings discovered independently by Fuchs and by Chekanov and Pushkar in the context of generating families (see Definition \ref{ruling}) \cite{Fuchs, PC, FI, NS, NRSS, Sab}. In particular, $(\alg_\Lambda,\dd_\Lambda)$ has an $m$-graded augmentation if and only $\Lambda$ admits an $m$-graded normal ruling.
Moreover, in \cite{HR}, the authors were able to show that the $m$-graded ruling polynomial specialized at $q^{1/2}-q^{-1/2}$ returns the augmentation number $Aug_m(\Lambda,\F_q)$. This is a (normalized) count of $m$-graded augmentations into a finite field of order $q$. This implies that the ruling polynomial of $\Lambda$ is completely determined by $(\alg_\Lambda,\dd_\Lambda)$.


On the other hand, one can collect all of the augmentations into two types of $A_\infty$-categories, called positive and  negative augmentation categories. This idea of collecting augmentations into an $A_\infty$ category first appears in \cite{BC} where they introduce what is now called the negative augmentation category $\aug_-$. The positive augmentation category $\aug_+$ was constructed to show that there is an $A_\infty$-equivalence from $\aug_+(\Lambda,\F)$ to $\mathcal{S}h_1(\Lambda,F)$, the category of microlocal rank 1 constructible sheaves on $\R^2$ with microsupport on $\Lambda$. 
In both settings these are categories whose objects are augmentations and morphism spaces are certain forms of linearized contact cohomology \cite{NRSSZ}. In \cite{NRSS} the authors introduce a notion of cardinality for $\aug_+$ called the homotopy cardinality (a.k.a. multiplicative Euler characteristic \cite{BD01, MP07}) defined as 
\[\#\pi_{\geq0}\aug_+(\Lambda,\F_q)^*=\sum_{[\epsilon]\in\aug_+(\Lambda,\F_q)/\sim}\frac{1}{|\Aut(\epsilon)|}\cdot\frac{|H^{-1}\Hom(\epsilon,\epsilon)||H^{-3}\Hom(\epsilon,\epsilon)|\cdots}{|H^{-2}\Hom(\epsilon,\epsilon)||H^{-4}\Hom(\epsilon,\epsilon)|\cdots},\]
where here $|\Aut(\epsilon)|$ is the number of invertible elements in $H^0\Hom(\epsilon,\epsilon)$. 
Moreover, the main theorem of \cite{NRSS} asserts that the homotopy cardinality is actually computable via the ruling polynomial and the Thurston-Bennequin number: 
\[\#\pi_{\geq0}\aug_+(\Lambda,\F_q)^*=q^{tb(\Lambda)/2}R^0_{\Lambda}(z)\vert_{z=q^{1/2}-q^{-1/2}}\]
where $R^0_\Lambda$ is the 0-graded ruling polynomial. 
This homotopy cardinality has also been shown to be an effective obstruction to reversing certain Lagrangian cobordisms see \cite{CLL+}\cite{Pan}. 




\subsection{Main Results}Recently, \cite{LR,MR} defined $m$-graded $n$-colored ruling polynomials, $R^m_{n,\Lambda}(q) $ as a linear combination of ruling polynomials of various satellites of $\Lambda$ (see Definition \ref{coloredruling} for $m\neq1$) to obtain the similar counts of DGA maps into $(\Mat_n(\F_q),0)$. Such maps $\rho:(\alg_\Lambda,\dd_\Lambda)\to(\Mat_n(\F_q),0)$ are called \emph{n-dimensional representations}. Furthermore, in \cite{CDGG}, the augmentation category was generalized to the case where the target DGA is noncommutative. The goal of this article is to generalize the categorical aspect of the previously described story from augmentations to $n$-dimensional representations. 

\begin{maintheorem*}For a connected Legendrian $\Lambda\subset \R^3$ with $r(\Lambda)=0$ equipped with a single basepoint, we have 
    \[\#\pi_{\geq0}\rep_n(\Lambda,\F_q)^*=q^{n^2(\frac{tb-\chi_*}{2})}|\GL_n(\F_q)|^{-1}\cdot|\{\rho:\alg\to \Mat_n(\F_q)\}|
    \]
    where $\chi_*$ denotes the shifted Euler characteristic (defined in Equation \ref{eq: chi}) and the homotopy cardinality of the representation category is a multiple of the colored ruling polynomial: \[\#\pi_{\geq0}\rep_n(\Lambda,\F_q)^*=q^{n^2tb(\Lambda)/2}R^0_{n,\Lambda}(q).\]
\end{maintheorem*}

In Section \ref{conc} we provide some applications to Lagrangian concordance. In particular, a result of Pan \cite{Pan} implies that if there is an exact Lagrangian concordance from $\Lambda_- $ to $\Lambda_+$ then the homotopy cardinality of the augmentation category is increasing \[\#\pi_{\geq0}\aug_+(\Lambda_-,\F_q)^*\leq \#\pi_{\geq0}\aug_+(\Lambda_+,\F_q)^*.\] So if the inequality is not an equality, then this provides an obstruction to reversing the concordance. Section \ref{conc} establishes a similar result. Namely, we show if $r(\Lambda_\pm)=0$ and there is a concordance from $\Lambda_-$ to $\Lambda_+$, then \[\#\pi_{\geq0}\rep_n(\Lambda_-,\F_q)^*\leq \#\pi_{\geq0}\rep_n(\Lambda_+,\F_q)^*.\] We conjecture that this obstruction to reversing concordance is strictly stronger than that from the augmentation category and give a sketch of a proof, contingent on the existence of a knot $\Lambda_n$ that has a $0$-graded $n$-dimensional representation for some $n>1$, but no 0-graded augmentations.

For the remainder of the document, we will restrict our focus to Legendrian knots $\Lambda$ with $r(\Lambda)=0$ (which is required for the DGA of $\Lambda$ to have a $0$-graded $n$-dimensional representation). The document is organized as follows. In Section \ref{sec:repcat}, we give the background on the the $n$-dimensional representation category, $\rep_n(\Lambda,\F)$. Section \ref{sec:Equivrep}, is devoted to showing that two representations in $\rep_n(\Lambda,\F)$ are equivalent if and only if they are conjugate up to DGA homotopy. Section \ref{counting} establishes the main theorem listed above. After this, we conclude with Section \ref{conc}, which establishes some applications to concordance, and provides some further directions to explore.
\subsection*{Acknowledgements.} The author would like to thank his advisor Shea Vela-Vick for his support and many useful conversations over the years. We thank Dan Rutherford for numerous discussions and for suggesting the current problem. We also thank Mike Wong and Angela Wu for their helpful comments and advice. The author received partial support from NSF Grant DMS-1907654.

\section{The Representation Category}\label{sec:repcat}

In this section we review the construction of the (positive) representation category as defined in \cite{CDGG} and \cite{CNS18} with $A_\infty$ sign conventions coming from \cite{CNS18} and \cite{NRSSZ}. Although the objects and hom spaces are easily defined in the representation category, the interesting part of this $A_\infty$-category comes from its $A_\infty$ structure. We briefly summarize the definitions given in Section 2 of \cite{CNS18}, omitting some details.  

\subsection{From \texorpdfstring{$\alg_A$}{} to \texorpdfstring{$M^\vee$}{}} Let $\F$ be a field and $A:=\Mat_n(\F)$ and $\mathcal{R}:=\{a_1, \dots ,a_m\}$ a generating set equipped with gradings. Let $M_A$ be the free graded $A$-$A$-bimodule generated by $\mathcal{R}$ and having coefficient ring in grading 0. Set $M_A^{\boxtimes 0}:=A$ and define $\alg_A$ to be the graded $\F$-algebra given by \[\alg_A:=\mathcal{T}_A(M_A)=\bigoplus_{n=0}^\infty M_A^{\boxtimes n}\]
where $\boxtimes=\otimes_A$, and multiplication is defined by concatenation (and matrix multiplication). Suppose that in addition $\alg_A$ comes equipped with an differential $\dd_A$ satisfying the signed Leibniz rule and $\dd_A\vert_{A}=0$. As usual we will call $\dd_A$ \emph{augmented} if $\dd_A$ has no constant terms, that is, $\dd_A(\alg_A)\subset \bigoplus_{n=1}^\infty M_A^{\boxtimes n}$. 

Suppose $(\alg_A,\dd_A)$ is an augmented DGA. Let $M^\vee= \Hom_{A-A}(M_A,A)$ with grading determined by $\vert a_i^\vee\vert=\vert a_i\vert +1$. Observe that an element $\phi\in M^\vee$ is completely determined by where it sends elements of $\mathcal{R}$ and thereby can be identified with the free $A$-module generated by $\{a_1^\vee,\ldots,a_r^\vee\}$. A dualizing procedure of the $k$-th order part of the augmented differential $\dd_A$ gives $\F$-linear maps \[m_k:(M^\vee)^{\otimes_\F k}\to M^\vee\]
determined by the rule that a term in the augmented differential of the form 
\[\dd_A(b)=A_ka_{i_k}A_{k-1} a_{k-1}\cdots a_{i_1}A_0+\cdots\]
with $A_j\in A$, then contributes a term in $m_k$ as
\[ m_k(M_{i_1}a_{i_1}^\vee,\ldots,M_{i_k}a_{i_k}^\vee)=(-1)^\sigma (A_kM_{i_k}A_{k-1} M_{i_{k-1}}\cdots M_{i_1}A_0)b^\vee+\cdots\]
where 
\[\sigma=\frac{k(k-1)}{2}+\left(\sum_{p<q}|a_{i_p}^\vee||a_{i_q}^\vee|\right)+|a_{i_2}^\vee|+|a_{i_4}^\vee|+\cdots\]
More geometrically the terms of $m_k(M_{i_1}a_{i_1}^\vee,\ldots,M_{i_k}a_{i_k}^\vee)$ are obtained (up to sign) by replacing each of generators $a_{i_j}$ appearing in the augmented differential with the corresponding coefficient on $a_{i_j}^\vee$ and then recording the dual of the positive puncture (see Figure \ref{fig:phantom}).
We will later adopt shorthand notation $\langle m_k(M_{i_1}a_{i_1}^\vee,\ldots,M_{i_k}a_{i_k}^\vee),b^\vee \rangle$ for the coefficient of $b^\vee$ appearing in $m_k(M_{i_1}a_{i_1}^\vee,\ldots,M_{i_k}a_{i_k}^\vee)$.\begin{figure}[htb]
\labellist
\small\hair 2pt
 \pinlabel {$A_6$} [ ] at 33 74
 \pinlabel {$a_6$} [ ] at 10 5
 \pinlabel {$a_5$} [ ] at 50 5
 \pinlabel {$A_5$} [ ] at 31 35
 \pinlabel {$A_4$} [ ] at 70 35
 \pinlabel {$a_4$} [ ] at 89 5
 \pinlabel {$A_3$} [ ] at 110 35
 \pinlabel {$a_3$} [ ] at 128 5
 \pinlabel {$A_2$} [ ] at 149 35
 \pinlabel {$a_2$} [ ] at 167 5
 \pinlabel {$A_1$} [ ] at 187 35
 \pinlabel {$a_1$} [ ] at 205 5
 \pinlabel {$A_0$} [ ] at 181 74
 
 \pinlabel {$A_6$} [ ] at 262 77
 \pinlabel {$M_6$} [ ] at 249 15
 \pinlabel {$A_5$} [ ] at 269 46
 \pinlabel {$M_5$} [ ] at 289 15
 \pinlabel {$A_4$} [ ] at 308 46
 \pinlabel {$M_4$} [ ] at 327 15
 \pinlabel {$A_3$} [ ] at 347 46
 \pinlabel {$M_3$} [ ] at 366 15
 \pinlabel {$A_2$} [ ] at 386 46
 \pinlabel {$M_2$} [ ] at 405 15
 \pinlabel {$A_1$} [ ] at 423 46
 \pinlabel {$M_1$} [ ] at 445 15
 \pinlabel {$A_0$} [ ] at 430 77
 \pinlabel {$b$} [ ] at 108 115
 \pinlabel {$b$} [ ] at 346 125
\endlabellist
\centering
\includegraphics[scale=1.0]{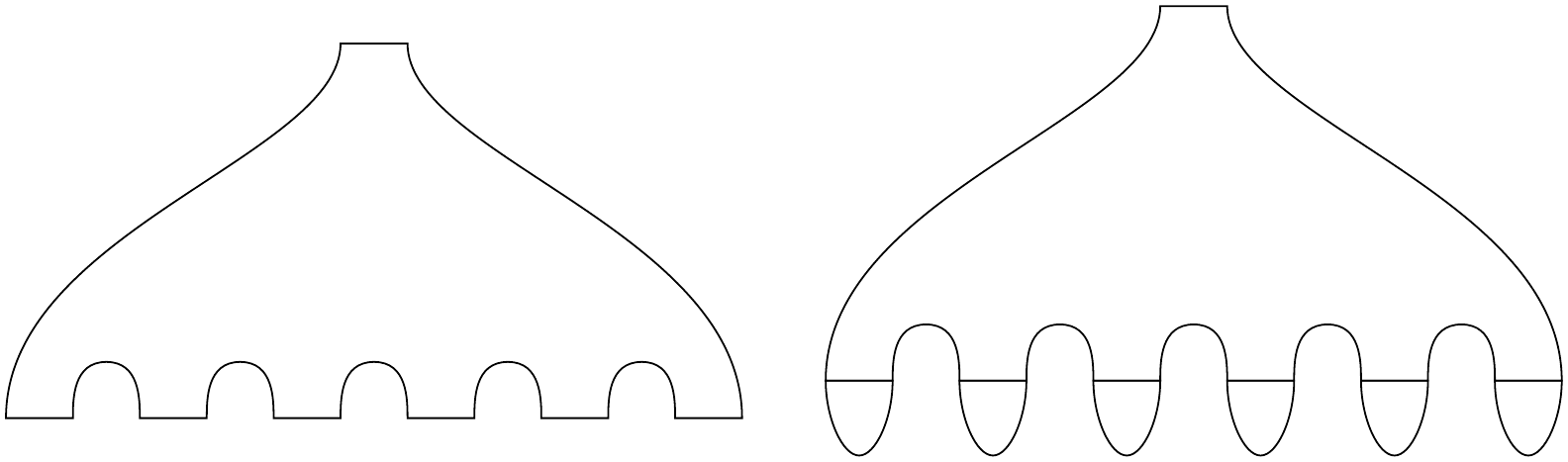}
\caption{Left: A geometric picture of an augmented disk contributing to $\pm A_6a_6A_5a_5A_4a_4A_3a_3A_2a_2A_1a_1A_0$ in $\dd_A(b)$ (with the $A_6$ labelled strand corresponding to the overstrand of $b$). Right: The corresponding term contributing to the coefficient $\langle m_6(M_1a_1^\vee,M_2a_2^\vee,M_3a_3^\vee,M_4a_4^\vee,M_5a_5^\vee,M_6a_6^\vee),b^\vee\rangle$}
\label{fig:phantom}
\end{figure}
The pair $(M^\vee,\{m_k\})$ then forms an $A_\infty$ algebra as described in \cite{CDGG} and \cite{CNS18}. 
\subsection{Constructing augmented differentials on \texorpdfstring{$\alg_A$}{} from representations} Let $R$ be another $\F$-algebra and $\alg_{R}$ be another DGA (constructed as a tensor algebra as before) 
equipped with a differential $\dd_R$ satisfying the Leibnez rule and $\dd_R\vert_R=0$. For us, this is going to be the fully-noncommutative Chekanov-Eliashberg DGA of a Legendrian knot equipped with a single basepoint tensored with $\F$ (so in particular $R=\F[t^{\pm 1}$]). 

\begin{definition}
    An \emph{$n$-dimensional representation} of $(\alg_R,\dd_R)$ to $A$ is a unital DGA map
    \[\rho:(\alg_R,\dd_R)\to (A,0)\]
    (in particular $\rho\circ\dd=0$ and $\rho(a_i)=0$ unless $|a_i|=0$).
\end{definition}
\begin{remark}
     The previous definition is actually the definition a $0$-graded representation in the sense of \cite{LR} as it has support on 0-graded elements of $\mathcal{R}$. Furthermore, if the DGA of $\Lambda$ has 0-graded $n$-dimensional representation, then $\Lambda$ must have $r(\Lambda)=0$. \emph{Therefore, we will assume, for the rest of the paper, that $r(\Lambda)=0$, as there is no real loss in generality}.
\end{remark}

Given a representation $\rho:(\alg_R,\dd_R)\to (A,0)$, one gets an induced differential on $\alg_A$ by simply replacing elements of $R$ appearing in $\dd_R$ with their image under $\rho$. Letting $\dd_A$ be the induced differential one can twist $\dd_A$ by $\rho$ to get an augmented differential $\dd_A^\rho$ on $\alg_A$. 
In particular, define the ``twisting'' $\F$-algebra homomorphism on generators by $\phi_\rho:\alg_R\to\alg_A$ by
\[\phi_\rho(a)=\begin{cases}
    a+\rho(a) &a\in\mathcal{R}\\
    \rho(a) &a\in R
\end{cases}\]
and then extend $\phi_\rho$ on all of $\alg_R$ by the signed Leibniz rule. Then one gets an augmented differential $\dd_A^\rho:\dd_A\to\dd_A$ by defining 
\[\dd_A^\rho(a)=\begin{cases}
    \phi_\rho(\dd_R(a)) & a\in\mathcal{R}\\
    0 & a\in A
\end{cases}\] 
on generators and extending by the signed Leibniz rule. With an augmented differential in hand one can construct $(M^\vee,\{m_k\}_{k\geq1})$ as in the previous subsection.
\subsection{The representation category}
As in the case of augmentation categories, there are two flavors for the $n$-dimensional representation categories a ``negative'' version and a ``positive'' version c.f. \cite{BC} \cite{CNS18} and \cite{NRSSZ}. The negative version is the $A_\infty$ category whose objects are $n$-dimensional representations and whose self hom-spaces form the $A_\infty$ algebra defined earlier in this section. A downside of the negative version is that it fails to to have a strict unit. In contrast, the positive representation category does have a strict unit given by $-y^\vee$ (see \cite{CNS18}). The existence of this unit is essential to the remainder of this document and so we will drop the positive adjective with understanding that we are talking about the positive representation category.

\begin{definition}
    Let $\Lambda$ be a Legendrian knot equipped with a basepoint and $(\alg_\Lambda,\dd_\Lambda)$ the Chekanov-Eliashberg DGA. The representation category $\rep_n(\Lambda,\F)$ is the $A_\infty$ category defined by
    \begin{itemize}
        \item[1.] Objects: $n$-dimensional representations $\rho:(\alg,\dd)\to(\Mat_n(\F),\dd)$
        \item[2.] Morphisms: Given two representations $\rho_1,\rho_2$ we define $\Hom(\rho_1,\rho_2)$ to be the free graded $\Mat_n(\F)$-module generated by $a_1^\vee,\ldots,a_m^\vee, x^\vee, y^\vee$ where $a_i^\vee$ are formal duals to Reeb chords along with two extra generators. Gradings are given by $|a_i^\vee|=|a_i|+1$, $|x^\vee|=1$ and $|y^\vee|=0$
        \item[3.] Compositions: The $\F$-multilinear maps \[m_k:\Hom(\rho_k,\rho_{k+1})\otimes\Hom(\rho_{k-1},\rho_k)\otimes\cdots\otimes\Hom(\rho_1,\rho_2)\to\Hom(\rho_1,\rho_{k+1})\] are defined by dualizing the augmented differential of the $(k+1)$-copy of $\Lambda$ with respect to the \emph{pure} representation $\bm{\rho}=(\rho_1,\ldots,\rho_{k+1})$ as before.
    \end{itemize}
\end{definition}
We now flesh out 3 in more detail. We describe the pure representation after we review the DGA of the Lagrangian projection $k+1$-copy $\Lambda^{k+1}_f$ described in \cite{NRSSZ}. Let $f$ be a perturbing Morse function with exactly 1 maximum followed by exactly 1 minimum occurring within a small neighborhood of the basepoint (here the  ordering is prescribed via the orientation of $\Lambda$). Using $f$ one can obtain a Lagrangian projection of the $k+1$-copy, $\Lambda^{k+1} _f$, to create $k+1$ parallel copies of $\Lambda$ (see \cite[Section 4]{NRSSZ} for more details). Diagrammatically this looks like a blackboard framed $k+1$ copy of $\pi_{xy}(\Lambda)$ with a single ``dip'' near the orginial basepoint on $\Lambda$ as depicted in Figure \ref{fig:3copy}.

\begin{figure}[ht]
\labellist
\small\hair 2pt
 \pinlabel {$*$} [ ] at 160 152
 \pinlabel {$*$} [ ] at 160 144
 \pinlabel {$*$} [ ] at 160 137
\endlabellist
\centering
\includegraphics[scale=1.0]{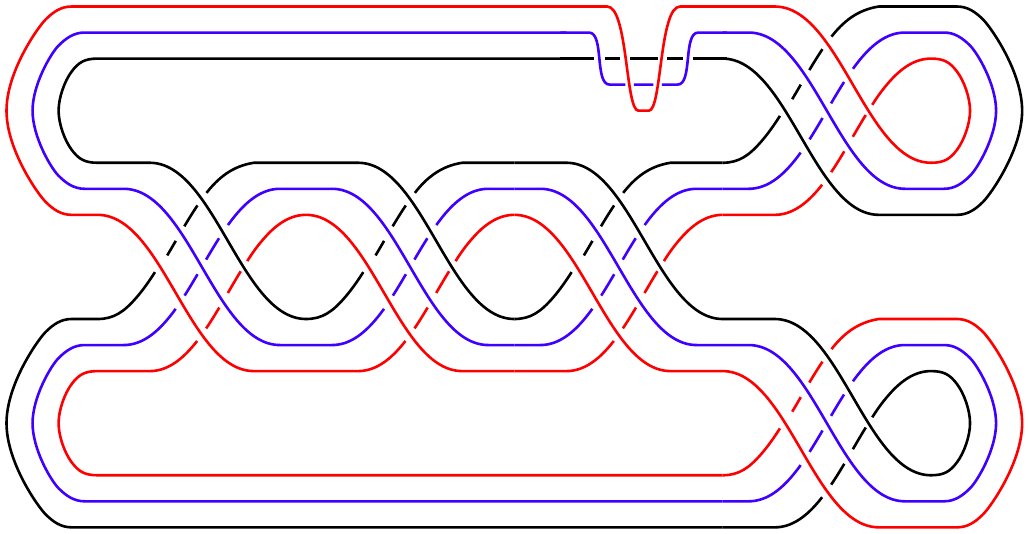}
\vspace{1cm}
\labellist
\small\hair 2pt
 \pinlabel {$a_i^{13}$} [ ] at 74 114
 \pinlabel {$a_i^{12}$} [ ] at 91 98
 \pinlabel {$a_i^{11}$} [ ] at 105 83
 \pinlabel {$a_i^{21}$} [ ] at 90 68
 \pinlabel {$a_i^{22}$} [ ] at 75 83
 \pinlabel {$a_i^{23}$} [ ] at 60 98
 \pinlabel {$a_i^{33}$} [ ] at 44 84
 \pinlabel {$a_i^{32}$} [ ] at 60 68
 \pinlabel {$a_i^{31}$} [ ] at 75 53
 \pinlabel {$x^{12}$} [ ] at 245 63
 \pinlabel {$x^{23}$} [ ] at 268 63
 \pinlabel {$x^{13}$} [ ] at 268 40
 \pinlabel {$y^{12}$} [ ] at 307 40
 \pinlabel {$y^{13}$} [ ] at 308 62
 \pinlabel {$y^{23}$} [ ] at 330 62
 \pinlabel {$\Lambda_1$} [ ] at 151 113
 \pinlabel {$\Lambda_2$} [ ] at 138 129
 \pinlabel {$\Lambda_3$} [ ] at 122 145
 \pinlabel {$\Lambda_1$} [ ] at 151 32
 \pinlabel {$\Lambda_2$} [ ] at 138 18
 \pinlabel {$\Lambda_3$} [ ] at 122 3
 \pinlabel {$\Lambda_1$} [ ] at 215 114
 \pinlabel {$\Lambda_2$} [ ] at 215 92
 \pinlabel {$\Lambda_3$} [ ] at 215 69
\endlabellist
\vspace{.5cm}
\centering
\includegraphics[scale=1.0]{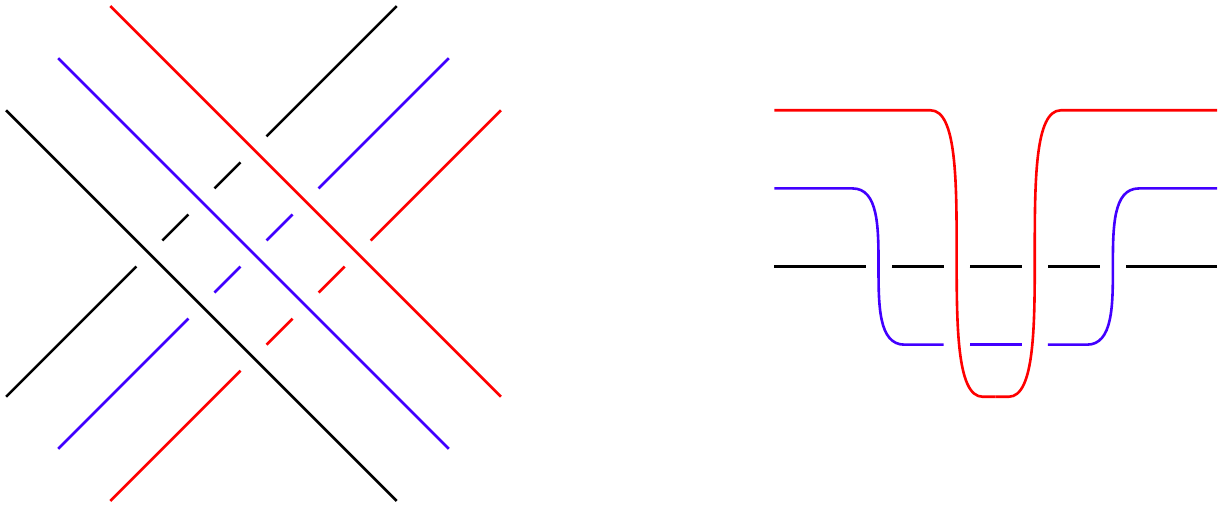}
\caption{Top: The Lagrangian 3-copy of the trefoil. Bottom: Crossing and dip generators}
\label{fig:3copy}
\end{figure}
Label the components of the copy as $\Lambda_1$ to $\Lambda_{k+1}$ starting from the top and working down \emph{near the basepoint}. We label Reeb chords of the perturbed $k+1$-copy by $a_k^{i,j}$, $x^{i,j}$, and $y^{i,j}$ where the superscript $i,j$ denotes a Reeb chord from $\Lambda_j$ to $\Lambda_i$ (with $\Lambda_j$ under $\Lambda_i$). Here $x^{i,j}$ (resp. $y^{i,j}$) are the Reeb chords produced from the maximum (resp. minimum) of $f$ during the perturbation process. We will call a Reeb chord of the $k+1$-copy \emph{pure} if $i= j$ and \emph{mixed} otherwise.
Adopting this labeling scheme, one can describe the DGA of the $k+1$-copy explicitly using matrices.
\begin{proposition}\cite[Proposition 4.14]{NRSSZ}
    Let $(\alg,\dd)$ be the DGA of $\Lambda$ generated by Reeb chords $a_1,\ldots,a_m$ and invertible generators $t^{\pm1}$ corresponding to the basepoint. Then the DGA of the $k$-copy, denoted $(\alg_{\Lambda^k_f},\dd_{\Lambda^k_f})$ is generated by:
    \begin{itemize}
        \item $a^{i,j}_\ell$ for $1\leq i,j\leq k$ and $1\leq\ell\leq m$, with $|a^{i,j}_\ell|=|a_\ell|$ 
        \item $x^{i,j}$ for $1\leq i<j\leq k$, with $|x^{i,j}|=0$
        \item $y^{i,j}$ for $1\leq i<j\leq k$, with $|y^{i,j}|=-1$
        \item $t^{\pm1}_i$ for $1\leq i \leq k$, with $|t^{\pm1}_i|=0$
    \end{itemize}
    The differential of the $k$-copy is described as follows. Define $k\times k$ matrices $A_1,\ldots A_m$, $X,Y$, and $\Delta$
    by $(A_\ell)_{ij}=a_\ell^{ij}$ and
    \[ (X)_{ij}=\begin{cases}
        1 & i=j\\
        x^{i,j} &i<j\\
        0 & j>i
    \end{cases},\qquad (Y)_{ij}=\begin{cases}
        y^{i,j} &i<j\\
        0 &i\geq j
    \end{cases},\qquad
    (\Delta)_{ij}=\begin{cases}
        t_i &i=j\\
        0 &i\neq j.
    \end{cases}\] Then, the differential of the $k$-copy is given entry-by-entry by
    \begin{align*}
        \dd_{\Lambda^k_f}(A_\ell)&=\Phi(\dd(a_\ell))+YA_\ell-(-1)^{|a_\ell|}A_\ell Y\\
        \dd_{\Lambda^k_f}(X)&=\Delta^{-1}Y\Delta X-XY\\
        \dd_{\Lambda^k_f}(Y)&=Y^2
    \end{align*}where $\Phi:\alg\to \Mat_k(\alg_{\Lambda^k_f})$ defined by $\Phi(a_\ell)=A_\ell$, $\Phi(t)=\Delta X$, and $\Phi(t^{-1})=X^{-1}\Delta^{-1}$.
\end{proposition}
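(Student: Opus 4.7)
The plan is to verify the three differential formulas by enumerating all rigid $J$-holomorphic disks in the Lagrangian projection of the perturbed $k$-copy $\Lambda^k_f$. Because $f$ has a single dip near the basepoint producing a max--min pair $x^{i,j}, y^{i,j}$ for each ordered pair of copies $i<j$, the disk count naturally splits into a \emph{global} contribution from disks whose image is essentially inherited from a disk of $\Lambda$, and a \emph{local} contribution from disks supported in a neighborhood of the dip.

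For the global contribution, I would invoke the standard parallel-copy principle (going back to Mishachev): for sufficiently small $f$, every rigid disk with positive puncture at $a^{i,j}_\ell$ lifts uniquely from a disk of $\Lambda$ with positive puncture at $a_\ell$, with each boundary arc assigned to one of the $k$ copies and the assignment jumping between copies precisely at the negative corners. This bimodule bookkeeping is exactly what the matrices $A_\ell$ encode. Whenever a lifted boundary arc crosses the basepoint, it must thread through the dip region, which converts each occurrence of $t$ in $\dd(a_\ell)$ to the product $\Delta X$ (the diagonal $\Delta$ records the basepoint variable on each copy, while $X$ accounts for possible copy switches via the upper dip crossings; similarly $t^{-1} \mapsto X^{-1}\Delta^{-1}$). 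Packaging this together gives exactly the $\Phi(\dd(a_\ell))$ term.

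For the local contribution I would work in an explicit diplike model and catalog the small rigid disks. A disk with positive puncture at $y^{i,j}$ is forced to be a triangle with negative corners $y^{i,p}, y^{p,j}$ for some $p$, giving $\dd(Y)=Y^2$. A disk with positive puncture at $x^{i,j}$ has two possible shapes: a triangle crossing the basepoint with one $y$- and one $x$-corner (yielding $\Delta^{-1}Y\Delta X$) and a bigon-like disk with negative corners $x$ and $y$ (yielding $-XY$). Finally, beyond the global contribution, a positive puncture at $a^{i,j}_\ell$ admits two extra small Leibniz-type disks with a single $y$-corner appearing just before or just after the $a_\ell$-strand, producing the additional terms $YA_\ell - (-1)^{|a_\ell|}A_\ell Y$; the sign comes from the order of Koszul swaps needed to place $y^\vee$ past $a_\ell^\vee$.

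The main obstacle is ruling out unexpected disks in which a global positive puncture interacts with the dip region more than the minimal amount required by the global model; this is controlled by the genericity and smallness of $f$ used in the perturbation, adapting Mishachev's parallel-copy argument. As a consistency check, one can expand $\dd_{\Lambda^k_f}^2$ on each block and verify it reduces to the image of $\dd^2=0$ under $\Phi$; this not only confirms the formulas but pins down both the sign $(-1)^{|a_\ell|}$ and the precise shape $\Phi(t)=\Delta X$. Since this is \cite[Proposition 4.14]{NRSSZ}, I would ultimately defer to their detailed disk enumeration for the full verification.
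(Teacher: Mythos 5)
This proposition is stated in the paper purely as a citation of \cite{NRSSZ} (Proposition 4.14); the paper supplies no proof of its own, so there is no internal argument to compare yours against. Your sketch does track the strategy of the cited proof: one enumerates rigid disks of the perturbed $k$-copy, separating disks inherited from disks of $\Lambda$ (which yield $\Phi(\dd(a_\ell))$, with the basepoint contributing $\Delta X$ precisely because the dip is placed at the basepoint) from the small disks governed by the Morse data of $f$. Two caveats are worth recording. First, the commutator terms $YA_\ell-(-1)^{|a_\ell|}A_\ell Y$ are not produced by disks supported near the dip: the chords $y^{i,j}$ sit at the minimum of $f$, far from the crossing $a_\ell$, so these terms come from long thin strips --- in the Morse--Bott limit, gradient trajectories of $f$ from the endpoints of $a_\ell$ to the minimum attached to a trivial strip over $a_\ell$. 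Controlling exactly these configurations, and excluding any others, is the substantive analytic content of the proposition, and your sketch compresses it into the single sentence about ``ruling out unexpected disks.'' Second, verifying $\dd_{\Lambda^k_f}^2=0$ is a good consistency check but cannot by itself ``pin down'' the sign $(-1)^{|a_\ell|}$ or the identification $\Phi(t)=\Delta X$, since more than one convention can be internally consistent. Given that you (like the paper) ultimately defer to \cite{NRSSZ} for the full disk analysis, the proposal is an acceptable summary of the known proof rather than a self-contained argument, and that matches how the statement is used here.
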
  

Lastly augmenting the $k+1$-copy DGA with respect to pure representation $\bm{\rho}$ means that we twist the differential with respect to the representation given by $\bm{\rho}(a_\ell^{i,j})=\delta_{ij}\rho_i(a_\ell)$ and $\bm{\rho}(t_i)=\rho_i(t)$. The construction of $m_k$ then follows from above after restricting to \emph{increasing} composable words (c.f. \cite[Lemma 3.15]{NRSSZ}), that is if $c^{i,j}_\ell$ is any of the $a_k^{i,j}$, or $x^{i,j}$ or $y^{i,j}$ generators then
\[\dd^{\bm{\rho}}_A(b^{1,k})=A_kc^{1,2}_{k}A_{k-1} c^{2,3}_{k-1}\cdots c^{k-1,k}_{1}A_0+\cdots\] gives rise to a contribution of 
\[m_k(M_{1}c_{1}^\vee,\ldots,M_{k}c_{k}^\vee)=(-1)^\sigma (A_kM_{k}A_{k-1} M_{k-1}\cdots M_{1}A_0)b^\vee+\cdots.\]
\section{Equivalence in the Representation Category}\label{sec:Equivrep}
We will call two representations in $\rep_n$ 
isomorphic if they are isomorphic in their associated cohomology category $H^*\rep_n$. In the augmentation category, one can show (as done in \cite{NRSSZ}) that this notion of equivalence is equivalent to two augmentations being DGA homotopic. However the augmentation category has the property that each $m_k$ is $\F$-linear whereas in the representation category there is no $\Mat_n(\F)$-linearity. On the other hand, we know from representation theory that two linear representations of a group are isomorphic if there is a conjugation relating them. Naturally, one might hope that combining these two perspectives yields an equivalent notion of isomorphic representations in $\rep_n$. Indeed, in this section we show that two representations are isomorphic if and only if they are ``conjugate DGA homotopic'' (Definition \ref{conghom} below). 

The following two ``characterization lemmas'' follow directly from the description of the DGA of the $n$-copy and the definition of $\rep_n$ and will be used throughout this section. The reader is encouraged to compare this with \cite[Lemmas 5.16 and 5.17]{NRSSZ}
\begin{lemma}[Characterization of $m_1$]\label{charm1}
    In $\Hom(\rho_1,\rho_2)$, we have
    \begin{align*} m_1(Ma_i^\vee)&=\sum_j\sum_{\substack{a_j,b_1,\ldots b_n \\ u\in\Delta (a_j;b_1,\ldots,b_n)}} \sum_{1\leq\ell \leq n}\delta_{b_\ell,a_i}\sigma_u \rho_1(b_1\cdots b_{\ell-1})M\rho_2(b_{\ell+1}\cdots b_n)a_j^\vee\\
        m_1(My^\vee)&=(\rho_1(t)^{-1}M\rho_2(t)-M)x^\vee+\sum_j(M\rho_2(a_j)-\rho_1(a_j)M)a_j^\vee\\
        m_1(Mx^\vee)&\in \Span_{\Mat_n(\F)}\{a_j^\vee\}
    \end{align*}
    where $\sigma_u$ denotes the product of orientation signs of $u$, and $\Delta(a_j; b_1\ldots b_n)$ denotes the moduli space of holomorphic disks with a single positive puncture at $a_j$ and the $b_i$'s are negative punctures or any integer power of the basepoint generator $t$ coming from intersections of the basepoint and the boundary of $u$.
\end{lemma}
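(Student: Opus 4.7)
The plan is to compute each case directly by applying the dualization recipe from Section \ref{sec:repcat} to the augmented differential $\dd^{\bm{\rho}}_A$ of the Lagrangian $2$-copy DGA, where $\bm{\rho}=(\rho_1,\rho_2)$. Since $k=1$, the sign factor $\sigma=1(0)/2 + 0 + 0 = 0$, so no additional signs enter from the dualization; every sign appearing in $m_1$ must come from the differential formulas in the proposition of \cite{NRSSZ} quoted above. To read off $m_1$ on a generator $Mc^\vee$ (with $c\in\{a_i,x,y\}$), I isolate the terms in $\dd^{\bm{\rho}}_A(b^{1,2})$ that are linear in the mixed generator $c^{1,2}$, for each choice of $b\in\{a_j,x,y\}$, and record the resulting coefficient as the $b^\vee$-component of $m_1(Mc^\vee)$ after substituting $M$ for $c^{1,2}$.

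First I treat $m_1(Ma_i^\vee)$. The only differential whose $(1,2)$-entry can contain $a_i^{1,2}$ linearly is $\dd(A_j) = \Phi(\dd(a_j)) + YA_j - (-1)^{|a_j|}A_jY$; the $Y$-terms are not linear in any $a_i^{1,2}$, so the contribution is entirely from $\Phi(\dd(a_j))_{1,2}$. A word $b_1\cdots b_n$ in $\dd(a_j)$ contributes to the $(1,2)$-entry of $\Phi(b_1)\cdots\Phi(b_n)$ in a $2\times 2$ matrix product precisely by picking off-diagonal entries at some positions and diagonal entries elsewhere. Linearity in $a_i^{1,2}$ forces the matrix to be off-diagonal at exactly one position $\ell$ with $b_\ell=a_i$, and the remaining $b_p$ contribute diagonally: $(1,1)$-entries for $p<\ell$ and $(2,2)$-entries for $p>\ell$. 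Under the pure augmentation $\bm{\rho}$, the $(1,1)$-entries of $\Phi(b_p)$ become $\rho_1(b_p)$ and the $(2,2)$-entries become $\rho_2(b_p)$ (using $\Phi(t)_{ii}=t_i$ and $\Phi(t^{-1})_{ii}=t_i^{-1}$ on the diagonal for basepoint powers). Reading off the coefficient of $a_j^\vee$ and summing over $j$, $u$, and $\ell$ yields exactly the claimed formula, with $\sigma_u$ accounting for the orientation sign of the contributing disk.

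Next, for $m_1(My^\vee)$ I collect terms linear in $y^{1,2}$ across the three differentials. The entry $(Y^2)_{1,2}$ vanishes because $Y$ is strictly upper triangular with only a single off-diagonal entry, so $\dd(Y)$ contributes nothing. From $\dd(X)_{1,2} = (\Delta^{-1}Y\Delta X - XY)_{1,2} = t_1^{-1} y^{1,2} t_2 - y^{1,2}$, applying $\bm{\rho}$ and dualizing gives the $x^\vee$-coefficient $\rho_1(t)^{-1}M\rho_2(t) - M$. From $\dd(A_j)_{1,2}$, the part $\Phi(\dd(a_j))_{1,2}$ has no $y$-dependence, while $(YA_j - (-1)^{|a_j|}A_jY)_{1,2} = y^{1,2}a_j^{2,2} - (-1)^{|a_j|}a_j^{1,1}y^{1,2}$; since only $0$-graded $a_j$ can appear in a $0$-graded representation, $(-1)^{|a_j|}=1$, and after augmenting and dualizing we obtain the $a_j^\vee$-coefficient $M\rho_2(a_j) - \rho_1(a_j)M$, as claimed.

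Finally, for $m_1(Mx^\vee)$, I check which differentials have $(1,2)$-entries linear in $x^{1,2}$. Both $\dd(X)_{1,2}$ and $\dd(Y)_{1,2}$ above contain no $x$; the only source is $\Phi(\dd(a_j))_{1,2}$, where basepoint letters $t^{\pm1}$ in a word from $\dd(a_j)$ can be replaced by the off-diagonal entries of $\Phi(t)=\Delta X$ or $\Phi(t^{-1})=X^{-1}\Delta^{-1}$, both of which have $(1,2)$-entries proportional to $x^{1,2}$. Consequently, $m_1(Mx^\vee)$ has nonzero coefficient only on $a_j^\vee$ generators, proving the third assertion. The main (and only real) obstacle is bookkeeping: tracking which words in $\dd(a_j)$ produce which mixed matrix entry, and verifying that the sign conventions from the Leibniz rule on the $2$-copy DGA align with the $\sigma=0$ convention on $m_1$; in the $0$-graded setting both collapse to the form above.
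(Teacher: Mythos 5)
Your computation is correct and is essentially the paper's own argument: the paper simply asserts that Lemma \ref{charm1} ``follows directly from the description of the DGA of the $k$-copy and the definition of $\rep_n$,'' and your proof is precisely that direct verification --- extracting the terms of $\dd^{\bm{\rho}}$ of the $2$-copy linear in each mixed generator, with $\sigma=0$ for $k=1$, including the correct handling of the $(-1)^{|a_j|}$ sign and of the basepoint entries of $\Phi(t^{\pm1})$.
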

\begin{lemma}[Characterization of $m_2$]\label{charm2}
    Assume that the Reeb chords of $\Lambda$ are labeled by increasing height, $h(a_1)<h(a_2)<\cdots<h(a_m)$. Then for any representations $\rho_1,\rho_2$, and $\rho_3$, the map $m_2:\Hom(\rho_2,\rho_3)\otimes\Hom(\rho_1,\rho_2)\to\Hom(\rho_1,\rho_3)$ satisfies  
    \begin{itemize}
        \item $m_2(Ma_i^\vee,Na_j^\vee)\in \Span_{\Mat_n(\F)}\{a_\ell^\vee | \ell>\max\{i,j\}\}$
        \item $m_2(Mx^\vee,Na_i^\vee)$, $m_2(Ma_i^\vee,Nx^\vee)$, and $m_2(Mx^\vee,Nx^\vee)$ are all in $\Span_{\Mat_n(\F)}\{a_j^\vee\}$
        \item $m_2(y^\vee,\alpha)=m_2(\alpha,y^\vee)=-\alpha$ for any $\alpha\in \Span_{\Mat_n(\F)}\{y^\vee,x^\vee,a_1^\vee,\ldots, a_m^\vee\}$
        \item
        For $\alpha=M_\alpha y^\vee+A$ and $\beta=M_\beta y^\vee+B$ with $A,B\in\Span_{\Mat_n (\F)}\{a_i^\vee\vert \;|a_i|=-1\}$ the following holds: \begin{align*}
        m_2(\alpha,\beta)&=m_2(M_\alpha y^\vee,M_\beta y^\vee)+m_2(M_\alpha y^\vee,B)+m_2(A,M_\beta y^\vee)+m_2(A,B)\\
            &=-M_\beta M_\alpha y^\vee -B \cdot M_\alpha-M_\beta A+m_2(A,B).\end{align*}
        Where  $B\cdot M_\alpha$ denotes right multiplication by $M_\alpha$ on all coefficients so if $B=\sum_iB_ia_i^\vee$ then $B\cdot M_\alpha=\sum_iB_iM_\alpha a_i^\vee$.    
    \end{itemize}
\end{lemma}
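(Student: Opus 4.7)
The plan is to unpack $m_2$ directly from the general $m_k$-construction with $k=2$, by identifying and dualizing all contributions to the $(1,3)$ entry of the twisted differential $\dd^{\bm{\rho}}(b^{1,3})$ that are quadratic in mixed chords in the ordered form $c^{1,2} \cdots c^{2,3}$. The first step is to tabulate these $(1,3)$ entries for $b \in \{a_\ell, x, y\}$ using the matrix formulas of the preceding Proposition, and to record that $\bm{\rho}$-twisting only affects the pure (diagonal) generators, replacing each $t_i$ by $\rho_i(t)$ and each $a^{i,i}_\ell$ by $a^{i,i}_\ell + \rho_i(a_\ell)$. Each of the four bullets then follows by inspecting a short, finite list of monomials.

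For Bullet 1, the key observation is that quadratic terms of the form $A \, a^{1,2}_j \, B \, a^{2,3}_i \, C$ can only arise from $\Phi(\dd(a_b))_{13}$, since the matrix corrections $YA_b$ and $-(-1)^{|a_b|}A_b Y$ always carry a $y$-factor. Because every generator appearing in $\dd(a_b)$ has strictly smaller height than $a_b$, dualization forces the output $a_b^\vee$ to have $b > \max\{i,j\}$. For Bullet 2, I would note that $\dd(Y)_{13} = y^{1,2}y^{2,3}$ contains no $x$-generator (so no $m_2$ with an $x$-input can produce $y^\vee$), and that a direct look at $\dd(X)_{13}$ shows its quadratic-in-mixed parts are only of the shape $y^{1,2}x^{2,3}$ and $x^{1,2}y^{2,3}$ (so no $m_2$ can produce $x^\vee$ for any of the three listed input pairs). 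This leaves $\Span_{\Mat_n(\F)}\{a_j^\vee\}$ as the only possible output.

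For Bullet 3, $\F$-multilinearity reduces the claim to $\alpha \in \{My^\vee, Mx^\vee, Ma_i^\vee\}$, and each subcase is handled by locating the unique responsible monomial: $y^{1,2}y^{2,3}$ in $\dd(Y)_{13}$ for the $y$-case, $-x^{1,2}y^{2,3}$ in $\dd(X)_{13}$ for the $x$-case, and $-(-1)^{|a_i|}a^{1,2}_iy^{2,3}$ in $\dd(A_i)_{13}$ for the $a_i$-case (and symmetrically the $y^{1,2}$-companion monomials for the $m_2(\alpha, y^\vee)$ side). A small subtlety is that the $\rho_i(t)$-coefficients appearing in $\dd(X)_{13}$ collapse to the identity exactly because the adjacent representations agree in the slot where $y^\vee$ plays the role of a strict unit. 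Bullet 4 is then immediate: $\F$-multilinearly expand $m_2(\alpha,\beta)$ into four pieces, apply Bullet 3 (restricted to the degree $|a_i|=-1$ slots) to the two mixed pieces, and combine with the direct computation $m_2(M_\alpha y^\vee, M_\beta y^\vee) = -M_\beta M_\alpha y^\vee$ read off from $\dd(Y)_{13}$.

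The hardest part will be sign bookkeeping: the Koszul exponent $\sigma$ in the $m_k$-sign rule depends on whether each input is $y^\vee$ (degree $0$), $x^\vee$ (degree $1$), or $a_i^\vee$ (degree $|a_i|+1$), and the extra $(-1)^{|a_\ell|}$ coming from the $-(-1)^{|a_\ell|}A_\ell Y$ correction must match against $(-1)^\sigma$ for the final $-\alpha$ (and the precise formulas of Bullet 4) to emerge on the nose. Once these signs are tracked for each of the finitely many cases, all four claims follow from direct inspection of the tabulated monomials.
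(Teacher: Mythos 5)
Your proposal is correct and follows exactly the route the paper intends: the paper omits the proof, asserting that the lemma ``follows directly from the description of the DGA of the $k+1$-copy'' (deferring to the analogous Lemmas 5.16--5.17 of \cite{NRSSZ}), and your plan of tabulating the quadratic-in-mixed-chords part of the $(1,3)$ entries of $\dd^{\bm{\rho}}$ for $b\in\{a_\ell,x,y\}$ and dualizing with the sign rule is precisely that computation. Your flagged subtlety about the $\rho_i(t)^{-1}\cdots\rho_i(t)$ factors in $\dd(X)_{13}$ cancelling only when $y^\vee$ sits in a self-hom space is a real and correctly handled point.
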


\begin{remark}
    Comparing Lemma \ref{charm2} with Lemma 5.17 in \cite{NRSSZ} the final bullet point seems somewhat mysterious to include, but these four computations are the only four that we will need.
\end{remark}

\begin{lemma}[Cocycle condition]\label{cocycle}
    Any element of the form $\alpha=My^\vee-\sum_iK(a_i)a_i^\vee\in \Hom^0(\rho_1,\rho_2)$ with $M\in \GL_n(\F)$, and $K(a_i)\in \Mat_n(\F)$ is a cocycle if and only if $M\rho_2(t)=\rho_1(t) M$ and $M\rho_2(a_i)-\rho_1(a_i) M=\wt{K}\circ\dd(a_i)$ holds for all $i$. Here $\wt{K}$ is the unique $(\rho_1,\rho_2)$-derivation from $\alg$ to $\Mat_n(\F)$ extending $K$.
\end{lemma}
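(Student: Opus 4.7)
The plan is to compute $m_1(\alpha)$ directly using Lemma \ref{charm1} and then read off the vanishing conditions coefficient-by-coefficient in the output basis $\{x^\vee, a_1^\vee, \ldots, a_m^\vee\}$. By $\F$-linearity of $m_1$, I would first decompose
\[
m_1(\alpha) = m_1(My^\vee) - \sum_i m_1(K(a_i)\,a_i^\vee),
\]
and note that by Lemma \ref{charm1} each $m_1(K(a_i)a_i^\vee)$ lies in $\Span_{\Mat_n(\F)}\{a_j^\vee\}$. In particular there is no $y^\vee$-output, the entire $x^\vee$-coefficient of $m_1(\alpha)$ is controlled by the $y^\vee$-piece alone, and the $a_j^\vee$-coefficients receive mixed contributions.

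From the formula for $m_1(My^\vee)$, the $x^\vee$-coefficient is $\rho_1(t)^{-1} M \rho_2(t) - M$; since $\rho_1(t)\in\GL_n(\F)$ this vanishes iff $M\rho_2(t) = \rho_1(t) M$, yielding the first stated condition. For the $a_j^\vee$-coefficient, the $y^\vee$-piece contributes $M\rho_2(a_j) - \rho_1(a_j)M$, while the remaining contribution is to show
\[
\sum_i \bigl\langle m_1(K(a_i)a_i^\vee),\,a_j^\vee\bigr\rangle \;=\; \wt{K}(\dd a_j).
\]

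To establish this, I would write $\dd a_j = \sum_u \sigma_u\, b_1\cdots b_n$ with $u\in\Delta(a_j;b_1,\ldots,b_n)$, swap the sum over $i$ with the sum over moduli data, and use the Kronecker delta $\delta_{b_\ell,a_i}$ to collapse the $i$-sum: after extending $K$ by zero on $t^{\pm 1}$ (which is consistent with the delta filtering out non-Reeb-chord $b_\ell$ and with the forced convention $\wt{K}(t)=0$), the inner sum $\sum_i \delta_{b_\ell,a_i}K(a_i)$ equals $K(b_\ell)$ uniformly. The total becomes
\[
\sum_u \sigma_u \sum_\ell \rho_1(b_1\cdots b_{\ell-1})\,K(b_\ell)\,\rho_2(b_{\ell+1}\cdots b_n),
\]
which is precisely $\wt{K}$ applied termwise by iterated use of the $(\rho_1,\rho_2)$-Leibniz rule. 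Hence the $a_j^\vee$-coefficient of $m_1(\alpha)$ is $M\rho_2(a_j) - \rho_1(a_j)M - \wt{K}(\dd a_j)$, whose vanishing is the second stated condition.

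The proof is essentially a bookkeeping exercise; the only real obstacle is the identification above, i.e.\ verifying that the moduli-space expansion of $m_1$ on dual Reeb generators reassembles exactly into the unique $(\rho_1,\rho_2)$-derivation $\wt{K}$ evaluated on the DGA differential. Once this identification is made, the cocycle equation $m_1(\alpha)=0$ decouples into the two stated conditions, completing the argument. Note that invertibility of $M$ is not actually used; it is recorded in the hypothesis for the downstream application in which $\alpha$ represents an isomorphism.
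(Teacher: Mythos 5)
Your proposal is correct and follows essentially the same route as the paper: compute $m_1(\alpha)$ via Lemma \ref{charm1}, read off the $x^\vee$-coefficient to get the $t$-condition, and collapse the moduli-space sum into $\wt{K}\circ\dd(a_j)$ via the $(\rho_1,\rho_2)$-Leibniz rule; your closing observation that invertibility of $M$ is not used is exactly the paper's Remark following the lemma. The only step you elide is the Koszul sign $(-1)^{|b_1\cdots b_{\ell-1}|}$ in the derivation expansion, which is harmless since $\rho_1$ vanishes off degree $0$ so every surviving term has that sign equal to $1$ --- the same implicit move the paper makes.
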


\begin{proof} We just compute $m_1(\alpha)$ (with a mild abuse of notation: allowing the $b_i$'s to denote the invertible $t$ generators)
    \begin{align*}
    m_1(\alpha)&=m_1(My^\vee)-m_1\left(\sum_iK(a_i)a_i^\vee\right)\\
    &=(\rho_1(t)^{-1}M\rho_2(t)-M)x^\vee+\sum_j(M\rho_2(a_j)-\rho_1(a_j)M)a_j^\vee\\
    &\quad-\sum_j\sum_{\substack{a_j,b_1,\ldots b_n \\ u\in\Delta (a_j;b_1,\ldots,b_n)}} \sum_{\substack{1\leq\ell \leq n\\ b_\ell\neq t^{\pm1}}}\sigma_u \rho_1(b_1\cdots b_{\ell-1})K(b_\ell)\rho_2(b_{\ell+1}\cdots b_n)a_j^\vee\\
    &=(\rho_1(t)^{-1}M\rho_2(t)-M)x^\vee+\sum_j(M\rho_2(a_j)-\rho_1(a_j)M)a_j^\vee\\
    &\quad-\sum_j\sum_{\substack{a_j,b_1,\ldots b_n \\ u\in\Delta (a_j;b_1,\ldots,b_n)}} \sum_{\substack{1\leq\ell \leq n\\ b_\ell\neq t^{\pm1}}}(-1)^{\vert b_1\cdots b_{\ell-1}\vert}\sigma_u \rho_1(b_1 \cdots b_{\ell-1})K(b_\ell)\rho_2(b_{\ell+1}\cdots b_n)a_j^\vee\\
    &=(\rho_1(t)^{-1}M\rho_2(t)-M)x^\vee+\sum_j(M\rho_2(a_j)-\rho_1(a_j)M-\wt{K}\circ\dd(a_j))a_j^\vee
     \end{align*}
Where at the last equality we used induction after including the vanishing terms coming from $b_\ell=t^{\pm1}$ (we have that $\wt{K}(t^{\pm1})=0$ for grading reasons). The claim now follows.
\end{proof}
\begin{remark}
    The above computation in Lemma \ref{cocycle} holds if the assumption that $M$ is invertible is dropped. However, we will see that invertibility of $M$ is needed for two representations to be equivalent in $\rep_n(\Lambda,\F)$.
\end{remark}
\begin{remark}\label{expo}    
If $M\rho_1$ and $\rho_2 M$ were DGA maps (which they are not), then having $M\rho_2(a_i)-\rho_1(a_i) M=\wt{K}\circ\dd(a_i)$ with $\wt{K}$ being a $(\rho_1,\rho_2)$-derivation is the definition of a DGA homotopy between $M\rho_2$ and $\rho_1 M$ and we would be tempted to write $M\rho_2\sim \rho_1M$. This can be fixed in the following way. First observe that $\rho_1$ and $M\rho_2M^{-1}$ are DGA maps. When $m_1(\alpha)=0$, then multiplying the coefficients of $a_j^\vee$ by $M^{-1}$ on the right gives that
\[\sum_j (M\rho_2(a_j)M^{-1}-\rho_1(a_j))a_j^\vee\overset{\star}{=}\sum_{\textrm{disks}}[(-1)^{\vert b_1\cdots b_{\ell-1}\vert}\sigma_u \rho_1(b_1 \cdots b_{\ell-1})K(b_\ell)\rho_2(b_{\ell+1}\cdots b_n)]M^{-1}a_j^\vee\]
where the $\sum_{\textrm{disks}}$ should be interpreted as the triple summation in the proof of Lemma \ref{cocycle}. Define $\wt{L}$ to be the unique $(\rho_1,M\rho_2M^{-1})$-derivation extending $L(x)=K(x)M^{-1}$. Then by definition $L(xy)=L(x)M\rho_2(y)M^{-1}+(-1)^{|x|}\rho_1(x)L(y)=[K(x)\rho_2(y)+(-1)^{|x|}\rho_1(x)K(y)]M^{-1}$. Then by induction, this gives that the coefficient of $a_j^\vee$ on the right hand side of equality $\star$ is exactly $\sum_j\wt{L}\circ \dd(a_j)$.
\end{remark}
\begin{definition}\label{conghom}
    Given two representations $\rho_1,\rho_2:\alg\to \Mat_n(\F)$, we say that $\rho_1$ and $\rho_2$ are \emph{conjugate DGA homotopic}, if there is $M\in \GL_n(\F)$ and a $(\rho_1, M\rho_2M^{-1})$-derivation $L$ so that $M\rho_2(t)M^{-1}=\rho_1(t)$ and $M\rho_2M^{-1}-\rho_1={L}\circ\dd$.  We will write $M\rho_2M^{-1}\sim\rho_1$ to mean a conjugate DGA homotopy 
\end{definition}

\begin{lemma}
    Conjugate DGA homotopy is an equivalence relation
\end{lemma}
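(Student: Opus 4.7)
The plan is to verify reflexivity, symmetry, and transitivity by translating each conjugate DGA homotopy through Lemma~\ref{cocycle} and Remark~\ref{expo} into a degree-zero cocycle in $\rep_n$, so that the $A_\infty$-calculations of Lemma~\ref{charm2} become available. Reflexivity is immediate: taking $M = I_n$ and $L = 0$, the zero map is trivially a $(\rho,\rho)$-derivation and the conjugation condition is automatic.

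For transitivity, given witnesses $(M_{12}, L_{12})$ for $\rho_1 \sim \rho_2$ and $(M_{23}, L_{23})$ for $\rho_2 \sim \rho_3$, I would package these as cocycles $\alpha_{12} = M_{12} y^\vee - K_{12} \in \Hom^0(\rho_1, \rho_2)$ and $\alpha_{23} = M_{23} y^\vee - K_{23} \in \Hom^0(\rho_2, \rho_3)$, where $K_{ij}$ is obtained from $L_{ij}$ by right-multiplication by $M_{ij}$ as in Remark~\ref{expo}. The $A_\infty$ identity $m_1 m_2(\alpha, \beta) = \pm m_2(m_1 \alpha, \beta) \pm m_2(\alpha, m_1 \beta)$ then forces $m_2(\alpha_{23}, \alpha_{12})$ to be a cocycle in $\Hom^0(\rho_1, \rho_3)$, and by the last bullet of Lemma~\ref{charm2} its $y^\vee$-coefficient is $-M_{12} M_{23} \in \GL_n(\F)$. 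Unpacking this composite cocycle through Lemma~\ref{cocycle} and Remark~\ref{expo} produces the required witness for $\rho_1 \sim \rho_3$ with composite conjugating matrix $M_{12}M_{23}$ (the overall sign is absorbed in the conjugation action).

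Symmetry is the most delicate step. Given a cocycle $\alpha = M y^\vee - K \in \Hom^0(\rho_1, \rho_2)$ with $M \in \GL_n(\F)$, I look for a cocycle $\beta = M^{-1} y^\vee - K' \in \Hom^0(\rho_2, \rho_1)$. The conjugation condition $M^{-1} \rho_1(t) = \rho_2(t) M^{-1}$ is immediate from the analogous condition for $\alpha$. The remaining cocycle equation reads $\wt{K'} \circ \dd = \Phi$ on generating Reeb chords, where $\Phi(a) := M^{-1} \rho_1(a) - \rho_2(a) M^{-1}$. A direct calculation shows $\Phi(ab) = \Phi(a) \rho_1(b) + \rho_2(a) \Phi(b)$, which matches the Leibniz-type identity satisfied by $\wt{K'} \circ \dd$ (the cancellations exploit that the target differential vanishes), so agreement on generators propagates to all of $\alg$. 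I would then build $K'$ by induction along the height filtration $h(a_1) < \cdots < h(a_m)$: at each step $\dd a_i$ involves only lower-height generators, and rearranging the cocycle relation for $\alpha$ gives $\Phi(a_i) = -M^{-1} \wt{K}(\dd a_i) M^{-1}$, which pins down the consistency condition to be satisfied by $K'$ at each new height.

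The main obstacle is the symmetry step: the naive ansatz $K'(a_i) := -M^{-1} K(a_i) M^{-1}$ fails because the noncommutativity of $\Mat_n(\F)$ causes the "inner" and "outer" copies of $M^{-1}$ to interact badly with the Leibniz rule, so correction terms must be introduced at each height level. Verifying that these corrections can always be chosen consistently is the technical crux, whereas transitivity and reflexivity are clean consequences of the $A_\infty$-structure and the strict unit $-y^\vee$.
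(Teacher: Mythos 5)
Your reflexivity step is fine, and your transitivity step is a legitimate (if heavier) alternative to the paper's: the $A_\infty$ Leibniz rule $m_1m_2 = \pm m_2(m_1\otimes 1)\pm m_2(1\otimes m_1)$ automatically supplies the quadratic correction terms that make the composite into a single cocycle with invertible $y^\vee$-coefficient $-M_{12}M_{23}$, which — after unpacking through Lemma \ref{cocycle} and Remark \ref{expo} — is exactly the nontrivial point if one instead tries to add the two homotopy operators directly (the sum of a $(\rho_1,M_{12}\rho_2M_{12}^{-1})$-derivation and a conjugated $(\rho_2,M_{23}\rho_3M_{23}^{-1})$-derivation is not itself a derivation of the right type). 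The problem is the symmetry step, which you correctly identify as the crux and then do not prove. You observe that the naive ansatz $K'(a_i)=-M^{-1}K(a_i)M^{-1}$ fails because $-M^{-1}\wt{K}(\cdot)M^{-1}$ is not a $(\rho_2,M^{-1}\rho_1M)$-derivation, and you then assert that correction terms ``must be introduced at each height level'' without constructing them or proving they exist. Showing that $\Phi(a)=M^{-1}\rho_1(a)-\rho_2(a)M^{-1}$ factors as $\wt{K'}\circ\dd$ for an honest derivation $K'$ \emph{is} the content of symmetry, so as written the argument is incomplete. (The needed induction does exist — it is essentially the construction of the inverse cocycle $\beta$ with $m_2(\alpha,\beta)=-y^\vee$ carried out later in the proof of Proposition \ref{equiv} — but you would have to reproduce that argument here.)

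The paper's own proof sidesteps all of this in two lines: $\rho_1\sim\rho_2$ means $M\rho_2M^{-1}$ is DGA homotopic to $\rho_1$ in the ordinary sense; conjugation by a fixed element of $\GL_n(\F)$ carries $(\phi,\psi)$-derivations to $(M\phi M^{-1},M\psi M^{-1})$-derivations and hence preserves DGA homotopy; and ordinary DGA homotopy is an equivalence relation by the standard result cited from \cite{FHT}. Symmetry and transitivity of DGA homotopy are precisely the nontrivial facts you are re-deriving by hand inside the $A_\infty$ category, so the citation is doing the work your proposal leaves undone.
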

\begin{proof}
    This follows from the fact that $M$ is invertible and that usual DGA homotopy is an equivalence relation which can be found in \cite{FHT}. 
\end{proof} 

We are now at the stage to prove when two representations are equivalent. As in the augmentation category, we define two representations $\rho_1$, $\rho_2$ in $\rep_n(\Lambda,\F)$ to be equivalent, if they are isomorphic in the cohomology category $H^*\rep_n(\Lambda,\F)$, (c.f. \cite{NRSSZ} \cite{CNS18}). 
\begin{proposition}\label{equiv}
    Two representations in $\rep_n(\Lambda,\F)$ are equivalent in $H^*\rep_n(\Lambda,\F)$ if and only if they are conjugate DGA homotopic
\end{proposition}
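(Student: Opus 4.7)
The plan is to prove both implications separately, leveraging the two characterization lemmas above together with Lemma \ref{cocycle}.

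For the forward direction, suppose $\rho_1$ and $\rho_2$ are equivalent in $H^*\rep_n(\Lambda,\F)$, witnessed by cocycles $\alpha\in\Hom^0(\rho_1,\rho_2)$ and $\beta\in\Hom^0(\rho_2,\rho_1)$ with $[m_2(\alpha,\beta)]=[m_2(\beta,\alpha)]=[-y^\vee]$. Writing $\alpha=My^\vee+A$ and $\beta=Ny^\vee+B$ in the form of Lemma \ref{charm2}, the last bullet there gives the $y^\vee$-coefficients of $m_2(\alpha,\beta)$ and $m_2(\beta,\alpha)$ as $-NM$ and $-MN$ respectively. Since Lemma \ref{charm1} shows that $m_1$ never outputs a $y^\vee$-term, the $y^\vee$-coefficient of any class in $H^0\Hom(\rho_i,\rho_i)$ is well-defined and matches any chain-level representative, so $MN=NM=I$ and $M\in\GL_n(\F)$. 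Applying Lemma \ref{cocycle} to $\alpha$ (with $K(a_i)=-A_i$), and then Remark \ref{expo} (defining $L:=K\cdot M^{-1}$), extracts the required conjugate DGA homotopy $(M,L)$.

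For the backward direction, I will start from a conjugate DGA homotopy $(M,L)$ and build an explicit cocycle $\alpha:=My^\vee-\sum_iK(a_i)a_i^\vee\in\Hom^0(\rho_1,\rho_2)$ with $K(a_i):=L(a_i)M$. Reversing the computation of Remark \ref{expo} shows $\wt K$ is a $(\rho_1,\rho_2)$-derivation with $\wt K\circ\dd(a_i)=M\rho_2(a_i)-\rho_1(a_i)M$, so Lemma \ref{cocycle} confirms $\alpha$ is a cocycle. The cohomological inverse will be built directly as $\beta=M^{-1}y^\vee+B\in\Hom^0(\rho_2,\rho_1)$: the last bullet of Lemma \ref{charm2} expands $m_2(\alpha,\beta)+y^\vee=-B\cdot M-M^{-1}A+m_2(A,B)$, so forcing this to vanish reduces to the equation $B_iM=-M^{-1}A_i+(m_2(A,B))_i$ at each Reeb chord index $i$. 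By Lemma \ref{charm2}'s first bullet, $(m_2(A,B))_i$ depends only on $B_j$ for $j<i$ (after ordering by Reeb height), so invertibility of $M$ lets me solve for each $B_i$ in turn, yielding $m_2(\alpha,\beta)=-y^\vee$ on the nose.

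The main obstacle is verifying that this recursively constructed $\beta$ is a cocycle and produces a genuine two-sided equivalence. For the first, applying $m_1$ to $m_2(\alpha,\beta)=-y^\vee$ and using the $A_\infty$ relation with $m_1\alpha=0$ yields $m_2(\alpha,m_1\beta)=0$; an injectivity argument for $m_2(\alpha,-)$, extracting $y^\vee$- and $x^\vee$-coefficients via invertibility of $M$ and then inducting along the Reeb height filtration on the remaining $a_i^\vee$-part, forces $m_1\beta=0$. For two-sidedness, an analogous recursive construction applied to the reverse homotopy (which exists by symmetry of conjugate DGA homotopy) produces $\beta'$ with $m_2(\beta',\alpha)=-y^\vee$; $A_\infty$-associativity in cohomology, together with the strict unit $-y^\vee$, then gives the chain $[\beta]=[m_2(-y^\vee,\beta)]=[m_2(m_2(\beta',\alpha),\beta)]=[m_2(\beta',m_2(\alpha,\beta))]=[m_2(\beta',-y^\vee)]=[\beta']$, so $[m_2(\beta,\alpha)]=[m_2(\beta',\alpha)]=[-y^\vee]$ and $\alpha$ is a two-sided cohomological equivalence.
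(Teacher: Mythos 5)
Your proof is correct and follows essentially the same route as the paper: the forward direction extracts invertibility of the $y^\vee$-coefficient from the last bullet of Lemma \ref{charm2} (noting via Lemma \ref{charm1} that coboundaries carry no $y^\vee$-term) and then invokes Lemma \ref{cocycle} and Remark \ref{expo}, while the backward direction builds the inverse $\beta=M^{-1}y^\vee+B$ recursively along the height filtration and verifies $m_1(\beta)=0$ from the $A_\infty$ relation exactly as in the paper. The only cosmetic differences are that you use both compositions $MN=NM=I$ where the paper uses one (sufficient for square matrices over a field), and you spell out the standard left/right-inverse agreement in the unital cohomology category, which the paper leaves implicit.
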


\begin{proof}
    Suppose $\rho_1\cong\rho_2$ in $\rep_n(\Lambda,\F)$. Then there exists cocycles $\alpha\in \Hom^0(\rho_1,\rho_2)$ and $\beta\in \Hom^0(\rho_2,\rho_1)$ such that $[m_2(\alpha,\beta)]=[-y^\vee]$ in $H^0\Hom(\rho_2,\rho_2)$ 
    Then, we have $m_2(\alpha,\beta)=-y^\vee+m_1(\gamma)$ for some $\gamma$ in grading $-1$. If we denote by $\langle m_1(\gamma),y^\vee\rangle$ the coefficient of $y^\vee$ appearing in $m_1(\gamma)$, we deduce that
        $\langle m_1(\gamma),y^\vee\rangle=0$ (by characterization of $m_1$ in Lemma \ref{charm1}) and that
        $\langle m_1(\gamma),x^\vee\rangle=0$ (for grading reasons). So we can rewrite 
        \begin{equation}
        m_2(\alpha,\beta)=-y^\vee+\sum_{|a_i|=-1}K(a_i)a_i^\vee.\label{m21}
        \end{equation}
        Since $\alpha, \beta $ are in grading 0 and are cocycles, we only need to verify that the coefficient on $y^\vee$ appearing in $\alpha$ is invertible, since Lemma \ref{cocycle} and Remark \ref{expo} would complete the proof. Towards this end, let $\alpha=M_\alpha y^\vee +A$ and $\beta=M_\beta y^\vee+B$ for $A,B\in \Span_{\Mat_n (\F)}\{a_i^\vee\vert \;|a_i|=-1\}$. Then, since $m_k$ is $\F$-multilinear we get
        \begin{align*}
            m_2(\alpha,\beta)&=m_2(M_\alpha y^\vee,M_\beta y^\vee)+m_2(M_\alpha y^\vee,B)+m_2(A,M_\beta y^\vee)+m_2(A,B)\\
            &=-M_\beta M_\alpha y^\vee -B \cdot M_\alpha-M_\beta A+m_2(A,B).\tag{3.2}\label{m22}
        \end{align*}
Comparing \ref{m21} and \ref{m22} and using Lemma \ref{charm2}, we see that $M_\alpha\in \GL_n\F$ as required.

Conversely, suppose that there is an $M\in \GL_n(\F)$ and a $(\rho_1,M\rho_2M^{-1})$-derivation $L$ such that $M\rho_2M^{-1}-\rho_1 =L\circ \dd$. Since we are viewing $(\Mat_n(\F),0)$ as a DGA concentrated in degree 0, we see the only nonzero $L(a_i)$ must come from those $a_i$ in DGA degree $-1$. Therefore, by Lemma \ref{cocycle} and Remark \ref{expo}, we see $\alpha=My^\vee +\sum_iL(a_i)a_i^\vee$ is a cocycle in $\Hom^0(\rho_1,\rho_2)$. We will find a $\beta,\gamma\in \Hom^0(\rho_2,\rho_1)$ such that $m_2(\alpha,\beta)=m_2(\gamma,\alpha)=-y^\vee$ and then once we show $\beta$ and $\gamma$ are cocycles it will follow that $[m_2(\alpha,\beta)]=[m_2(\gamma,\alpha)]=-[y^\vee]$. This will give us $\rho_1\cong \rho_2$.

The constructions of $\beta$ and $\gamma$ are symmetric so we only construct $\beta$. For grading reasons, $\beta=M_\beta y^\vee+ B$ for some $B\in \Span_{\Mat_n (\F)}\{a_i^\vee\vert \;|a_i|=-1\}$. Reviewing Equation \ref{m22} and abbreviating $\alpha$ as $M y^\vee +A$, we see $-y^\vee=m_2(\alpha,\beta)$ is equivalent to
\[-y^\vee=-M_\beta M y^\vee -B\cdot M-M_\beta A+m_2(A,B).\]
We need to define $M_\beta$ and $B$. It is clear that we must define $M_\beta=M^{-1}$. With this choice of $M_\beta$ we see that any choice for $B$ must satisfy $B=-M^{-1}A\cdot M^{-1}+m_2(A,B)\cdot M^{-1}$. We construct such a $B$ via induction on hieght as follows. If we insist on labeling our Reeb chords with respect to the height filtration, the characterization of $m_2$ implies that $\langle m_2(A,B),a_i^\vee\rangle$ is completely determined by $A$ and those $B_ja_j^\vee$ with $j<i$, and so $B_i$ can be defined by induction. This completes the construction of $\beta$  
To verify that $m_1(\beta)=0$ we appeal to the $A_\infty$ relations. We have
\[m_1(-y^\vee)=m_1(m_2(\alpha,\beta))=m_2(m_1(\alpha),\beta)+m_2(\alpha,m_1(\beta))\]
Observe that $m_1(-y^\vee)$ vanishes since we are computing $m_1(-y^\vee)$ in the self hom space $\Hom(\rho_1,\rho_1)$ (see Lemma \ref{cocycle}). We also know that $m_2(m_1(\alpha),\beta)=0$ since $\alpha$ is a cocycle. So the above equation reduces to $0=m_2(\alpha,m_1(\beta))$. Using $\F$-linearity, we have
\begin{align*}
0=m_2(My^\vee+A,m_1(\beta))&=m_2(My^\vee,m_1(\beta))+m_2(A,m_1(\beta))\\
&=-m_1(\beta)\cdot M+m_2(A,m_1(\beta))
\end{align*}
Rearranging we obtain $m_1(\beta)=m_2(A,m_1(\beta))\cdot M^{-1}$ which implies that $m_1(\beta)\in \Span_{\Mat_n(\F)}\{a_i\}$ by the characterization of $m_2$. 
Using the first bullet point of Lemma \ref{charm1}, one verifies by induction that $\langle m_1(\beta),a_i^\vee\rangle=0$, and so $m_1(\beta)=0$.
\end{proof}
\section{Counting in the Representation Category}\label{counting}
With the equivalence of representations in hand we can start toward the homotopy cardinality of the representation category. Define the homotopy cardinality as one would expect:
\[\#\pi_{\geq0}\rep_n(\Lambda,\F_q)^*=\sum_{[\rho]\in\rep_n(\Lambda,\F_q)/\sim}\frac{1}{|\Aut(\rho)|}\cdot\frac{|H^{-1}\Hom(\rho,\rho)||H^{-3}\Hom(\rho,\rho)|\cdots}{|H^{-2}\Hom(\rho,\rho)||H^{-4}\Hom(\rho,\rho)|\cdots}\] where the equivalence is equivalence in the cohomology category, which we saw from the previous section is the same as conjugate DGA homotopy.

\begin{lemma}\label{uniquerep}
    For $\rho:\alg\to \Mat_n(\F)$ and $M\in \GL_n(\F)$ both fixed, and $A_i\in \Mat_n(\F)$ arbitrarily chosen, then there exists a unique representation $\rho_0:\alg\to \Mat_n(\F)$ and $(\rho,M\rho_0M^{-1})$-derivation $L$ such that $L(a_i)=A_i$ and $M\rho_0M^{-1}\sim\rho$.
\end{lemma}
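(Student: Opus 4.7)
The plan is to construct $\rho_0$ generator-by-generator by induction on Reeb chord height, with everything forced by the defining equation of conjugate DGA homotopy. Recall from Definition \ref{conghom} that the relation $M\rho_0 M^{-1}\sim \rho$ is equivalent to the two equations
\[
M\rho_0(t)M^{-1}=\rho(t),\qquad M\rho_0(a_i)M^{-1}-\rho(a_i)=\wt{L}\circ\dd(a_i),
\]
where $\wt{L}$ is the unique $(\rho,M\rho_0 M^{-1})$-derivation extending $L$. Solving for $\rho_0$ gives the formula
\[
\rho_0(t)=M^{-1}\rho(t)M,\qquad \rho_0(a_i)=M^{-1}\bigl(\rho(a_i)+\wt{L}\circ\dd(a_i)\bigr)M,
\]
so both existence and uniqueness hinge on making sense of the right-hand side.

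First I would dispense with the invertible generators by setting $\rho_0(t)=M^{-1}\rho(t)M$, which is forced and invertible (and for grading reasons $\wt{L}(t^{\pm 1})=0$ plays no role). Then, labeling the Reeb chords in order of increasing height $h(a_1)<\cdots<h(a_m)$, I would inductively define $\rho_0(a_i)$ by the boxed formula above, noting that $\dd(a_i)$ is a sum of monomials in $t^{\pm 1}$ and in $a_j$'s with $j<i$, so that $\wt{L}\circ\dd(a_i)$ is expressible purely in terms of $L$ applied to generators of index $<i$ (via the $A_j$'s) and $\rho_0$ already defined on lower-height generators. The extension via the signed derivation rule
\[
\wt{L}(xy)=\wt{L}(x)\,M\rho_0(y)M^{-1}+(-1)^{|x|}\rho(x)\,\wt{L}(y)
\]
is unambiguous at each stage of the induction, so both $\rho_0(a_i)$ and the value of $\wt{L}$ on lower-degree products are completely determined. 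This simultaneously establishes uniqueness.

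What remains is to verify that the $\rho_0$ produced is actually a DGA map, i.e.\ $\rho_0\circ\dd=0$. For this I would apply the defining equation on the right by $\dd$ and use $\dd^2=0$:
\[
M(\rho_0\circ\dd)M^{-1}-\rho\circ\dd=\wt{L}\circ\dd^2=0.
\]
Since $\rho$ is a DGA map, $\rho\circ\dd=0$, hence $\rho_0\circ\dd=0$ as well. Unitality is automatic from the construction, and $\rho_0(t)$ is invertible, so $\rho_0$ is a bona fide $n$-dimensional representation.

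The only potentially tricky point is the interlocking of the inductive definitions of $\rho_0$ and $\wt{L}$: one must check carefully that when computing $\wt{L}(\dd(a_i))$, every application of the derivation rule invokes only values of $\rho_0$ on strictly lower-height generators (together with $\rho_0(t^{\pm 1})$, already fixed). This is the main bookkeeping obstacle, but it follows immediately once one observes that $\dd$ strictly decreases the height filtration on monomials and that $L$ itself is prescribed on all generators from the outset. Everything else is essentially linear algebra.
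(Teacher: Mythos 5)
Your proposal is correct and follows essentially the same route as the paper: both define $\rho_0(a_i)=M^{-1}\bigl(\rho(a_i)+\wt{L}\circ\dd(a_i)\bigr)M$ inductively on Reeb chord height (so existence and uniqueness are forced), and both verify the chain-map condition $\rho_0\circ\dd=0$ by applying the defining homotopy relation to $\dd(a_i)$ and using $\rho\circ\dd=0$ together with $\dd^2=0$.
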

\begin{proof}
    This is another induction using the height filtration. Suppose $\rho_0$ and $L$ have been defined to satisfy the desired properties on $\mathcal{F}_{i-1}\alg$, the subalgebra of $\alg$ generated by $a_1,\ldots a_{i-1}$. Define $\rho_0(a_i):=M^{-1}\rho(a_i)M+M^{-1}LM(\dd(a_i))$. By construction $M\rho_0(a_i)M^{-1}-\rho(a_i)=L(\dd(a_i)) $. Thus to verify that $\rho_0$ is a representation it only remains to show that $\rho_0:(\mathcal{F}_i\alg,\dd)\to (\Mat_n(\F),0)$ is a chain map and since $\Mat_n(\F)$ has the zero differential this amounts to showing $\rho_0(\dd(a_i))=0$. Since $\dd(a_i)\in\mathcal{F}_{i-1}\alg$ the inductive hypothesis implies that
    \[\rho_0(\dd(a_i))=M^{-1}\rho(\dd(a_i))M+M^{-1}LM(\dd^2(a_i))=0.\]
    Here the first term in the above equation vanishes since $\rho\circ\dd=0$ (the second term is 0 since $\dd$ is a differential).
\end{proof}
\begin{corollary}\label{countunits}
    For any representation $\rho\in\rep_n(\Lambda,\F_q)$ the cardinality of 
    \[ \bigsqcup_{\rho_0\in\rep_n(\Lambda,\F_q)} \{\alpha \in \Hom^0(\rho,\rho_0)\vert m_1(\alpha)=0 \text{ and } [\alpha]  \text{ is an isomorphism in } H^0\Hom(\rho,\rho_0) \}\]
    is $q^{n^2r}|\GL_n(\F_q)|$ where $r$ denotes the number of Reeb chords in DGA degree $-1$.
\end{corollary}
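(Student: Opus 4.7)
The plan is to set up an explicit bijection between the disjoint union in question and the Cartesian product $\GL_n(\F_q) \times \Mat_n(\F_q)^r$, which has cardinality $q^{n^2 r}|\GL_n(\F_q)|$. The bijection will send a pair $(\rho_0,\alpha)$ to the tuple of matrix coefficients of $\alpha$.

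First I would note that, purely from the gradings $|y^\vee|=0$, $|x^\vee|=1$, and $|a_i^\vee|=|a_i|+1$, every $\alpha\in\Hom^0(\rho,\rho_0)$ has the form
\[\alpha \;=\; M y^\vee + \sum_{|a_i|=-1} A_i a_i^\vee, \qquad M, A_i \in \Mat_n(\F_q),\]
so $\alpha$ is encoded by a tuple $(M, A_1, \ldots, A_r)$. The forward direction of Proposition \ref{equiv} (its proof in particular) shows that if $[\alpha]$ is an isomorphism in $H^0$, then $M$ must be invertible, i.e.\ $M\in\GL_n(\F_q)$. Conversely, applying Lemma \ref{cocycle} with $K(a_i)=-A_i$ and rewriting via Remark \ref{expo}, the cocycle condition $m_1(\alpha)=0$ is equivalent to the conjugate DGA homotopy data
\[M\rho_0(t)M^{-1} \;=\; \rho(t), \qquad M\rho_0 M^{-1} - \rho \;=\; \wt{L}\circ\dd,\]
where $L$ is the unique $(\rho, M\rho_0 M^{-1})$-derivation determined on the $r$ generators of degree $-1$ by $L(a_i) = -A_i M^{-1}$ (and forced to vanish in other degrees because $\Mat_n(\F_q)$ is concentrated in degree $0$). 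Once such a cocycle exists, Proposition \ref{equiv} guarantees that $[\alpha]$ is automatically an isomorphism, so the isomorphism hypothesis imposes no further constraint beyond $M\in\GL_n(\F_q)$.

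The last step applies Lemma \ref{uniquerep}: for each $M\in\GL_n(\F_q)$ and each prescribed assignment $L(a_i)\in\Mat_n(\F_q)$ at the degree $-1$ generators, the lemma produces a \emph{unique} representation $\rho_0$ and derivation $L$ realizing the displayed conjugate DGA homotopy. Because $M$ is invertible, the correspondence $(A_1,\ldots,A_r)\leftrightarrow(L(a_1),\ldots,L(a_r))$ given by $L(a_i)=-A_iM^{-1}$ is an invertible reparameterization at fixed $M$, so tuples $(M,A_1,\ldots,A_r)\in\GL_n(\F_q)\times\Mat_n(\F_q)^r$ correspond bijectively to pairs $(\rho_0,\alpha)$ in the disjoint union. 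Multiplying $|\GL_n(\F_q)|$ by $(q^{n^2})^r$ yields the claimed count $q^{n^2 r}|\GL_n(\F_q)|$. The only point that requires care is the bookkeeping between the raw cocycle coefficients $A_i$ and the derivation values $L(a_i)$ that feed into Lemma \ref{uniquerep}; with $M\in\GL_n(\F_q)$ fixed, this is just an invertible rescaling and does not affect the count.
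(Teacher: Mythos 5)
Your proposal is correct and follows essentially the same route as the paper's own proof: decompose a degree-zero cocycle as $My^\vee+\sum_{|a_i|=-1}A_ia_i^\vee$, use Proposition \ref{equiv} to force $M\in\GL_n(\F_q)$, and then invoke Lemma \ref{uniquerep} (via Lemma \ref{cocycle} and Remark \ref{expo}) to see that each choice of $M$ and of the $r$ derivation values determines a unique target $\rho_0$. The only difference is that you make explicit the invertible reparameterization $A_i\leftrightarrow L(a_i)=-A_iM^{-1}$, which the paper leaves implicit.
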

\begin{proof}
    A cocyle in $H^0\Hom_+(\rho,\rho_0)$ representing an isomorphism has the form $\alpha=My^\vee+A$ for $A\in \Span_{\Mat_n(\F_q)}\{a_i^\vee | |a_i|=-1\}$ and $M\in \GL_n(\F_q)$. We see that there are $|\GL_n\F_q|$ choices for $M$, and after fixing $M$, we obtain $q^{n^2r}$ isomorphisms (corresponding to the possible values of the homotopy operator $L$ on those elements of $\alg$ in grading $-1$) from $\rho$ to some $\rho_0$ (with $\rho_0$ uniquely determined by $\alpha$). 
\end{proof}

Lets denote by $\Hom^0(\rho,\rho_0)^\times$ the cocycles in $\Hom^0(\rho,\rho_0)$ that have left and right inverses under the $m_2$ operation. A result found in \cite{NRSS} immediately generalizes to show that there is a bijection from $\Hom^0(\rho,\rho_0)^\times$ to $\Hom^0(\rho,\rho)^\times$. We record their arguments here and explain the modification to our setting. Assume, that the the labeling of the Reeb chords of $\Lambda$ respects the height filtration and define a descending filtration by
\begin{align*}
    \mathcal{F}^{-1}\Hom(\rho_1,\rho_2)&=\Hom(\rho_1,\rho_2)\\
    \mathcal{F}^{0}\Hom(\rho_1,\rho_2)&=\Span_{\Mat_n(\F_q)}\{x^\vee, a_1^\vee, a_2^\vee,\ldots, a_n^\vee\}\\
    \mathcal{F}^{i}\Hom(\rho_1,\rho_2)&=\Span_{Mat_n(\F_q)}\{a_i^\vee, a_{i+1}^\vee,\ldots, a_n^\vee\}.
\end{align*}
Then the $A_\infty$ operations respect this descending filtration on $\rep_n(\Lambda,\F)$ in the following way.

\begin{proposition}\label{filtlemma}\cite{NRSS}
    For $k\geq1$, and any representations $\rho_1,\rho_2,\ldots\rho_{k+1}$ and any integers $1\leq i_1,\ldots, i_k<n$, we have
    \[m_k(\mathcal{F}^{i_k}\Hom(\rho_k,\rho_{k+1})\otimes\cdots \otimes \mathcal{F}^{i_1}\Hom(\rho_1,\rho_2))\subset \mathcal{F}^I\Hom(\rho_1,\rho_{k+1})\]
    where $I=\begin{cases}
        \max\{i_1,\ldots,i_k\}+1 & k\neq2\\
        \max\{i_1, i_2\} &k=2
    \end{cases}$
\end{proposition}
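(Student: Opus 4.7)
The plan is to combine an action-filtration argument for holomorphic disks in the $(k+1)$-copy DGA with a direct inspection of the three matrix formulas in Proposition 4.14. First I would record the \emph{height filtration} on $\alg_{\Lambda^{k+1}_f}$: under a sufficiently small Morse-Bott perturbation by $f$, every Reeb chord generator $a_\ell^{i,j}$ has height $h(a_\ell) + O(\epsilon)$, while every dip generator $x^{i,j}$ and $y^{i,j}$ has height $O(\epsilon)$. Any holomorphic disk with positive puncture $p$ and negative punctures $q_1, \ldots, q_m$ satisfies $h(p) > \sum_r h(q_r)$, so whenever $p = a_\ell^{\cdot,\cdot}$ and some $q_r = a_{i_r}^{\cdot,\cdot}$, the height-increasing labeling of the $a_i$'s forces $\ell > i_r$ for every such $r$.

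Second, I would read off the $(1, k+1)$-entries of each of the three formulas in Proposition 4.14 to constrain, for each $k$, which output basis vectors $y^\vee$, $x^\vee$, $a_\ell^\vee$ can appear. From $\dd(Y) = Y^2$, any disk with positive puncture $y^{1, k+1}$ has exactly two negatives, both of $y$-type; so output on $y^\vee$ appears only in $m_2(M y^\vee, N y^\vee)$, matching the fourth bullet of Lemma \ref{charm2}. From $\dd(X) = \Delta^{-1} Y \Delta X - X Y$, every $(1, k+1)$-entry is a sum of words of length at most two in the Reeb chord generators (after absorbing the diagonal $t_i$'s into coefficients), so output on $x^\vee$ appears only for $k \leq 2$. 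From $\dd(A_\ell) = \Phi(\dd(a_\ell)) + Y A_\ell - (-1)^{|a_\ell|} A_\ell Y$, the second summand contributes only $2$-letter words (each containing exactly one $y$), while $\Phi(\dd(a_\ell))$ produces composable words of arbitrary length, built from $a$-type and $x$-type letters only.

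Combining these two ingredients, each case of $I$ falls out. The case $k = 1$ is Lemma \ref{charm1} unpacked: $y^\vee$-inputs land in $\Span\{x^\vee, a_j^\vee\} = \mathcal{F}^0$, $x^\vee$-inputs land in $\Span\{a_j^\vee\} \subset \mathcal{F}^1$, and $a_i^\vee$-inputs land in $\Span\{a_j^\vee : j > i\} = \mathcal{F}^{i+1}$. For $k = 2$, Lemma \ref{charm2} already records the correct output generators; the action principle sharpens the $a$-$a$ and $x$-$a$ bullets by asserting that any $a_\ell^\vee$ in the output has $\ell > \max\{i_1, i_2\}$, hence lies in $\mathcal{F}^{\max + 1} \subset \mathcal{F}^{\max}$, and the $y$-input cases are covered by the strict unit identity $m_2(y^\vee, \alpha) = -\alpha$. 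For $k \geq 3$, the differential analysis above rules out both $y^\vee$ and $x^\vee$ components of the output, so only $a_\ell^\vee$ components remain; each contributing disk comes from $\Phi(\dd(a_\ell))$, and the action principle gives $\ell > i_r$ for every $a$-type input, hence $\ell \geq \max\{i_r\} + 1$. Inputs of $x$- or $y$-type impose no extra constraint beyond $\ell \geq 1$, which is always consistent with $\max\{i_r\} \leq 0$ in those slots.

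The main obstacle I anticipate is the bookkeeping for $\Phi(t^{\pm 1}) = (\Delta X)^{\pm 1}$: a single $t^{\pm 1}$ in a word of $\dd(a_\ell)$ expands into a sum of matrix entries, some of which contain an extra $x$-type letter, so the number of negative punctures in the resulting composable word need not equal the number of Reeb chord letters in the original word of $\dd(a_\ell)$. One must check that these extra $x$-type negatives do not spoil the strict inequality $\ell > i_r$ (immediate, since their heights are $O(\epsilon)$) and that every composable disk contributing to $m_k$ is captured by exactly one such expansion. This is routine once $\epsilon$ is fixed, but is the one place where care is needed.
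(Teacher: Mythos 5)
Your proof is correct and follows essentially the same route as the paper's (which is itself only a sketch deferring to \cite[Proposition 9]{NRSS}): the action/height principle for disks in the $(k+1)$-copy gives the basic filtration property, and inspection of the explicit differential from Proposition 2.4 (the quadratic terms $Y^2$, $\Delta^{-1}Y\Delta X - XY$, and $YA_\ell \mp A_\ell Y$) handles the $x^\vee$, $y^\vee$ outputs and yields the $+1$ improvement for $k\neq 2$. Your version is more detailed and even covers the $i_j\leq 0$ input slots that the stated hypotheses exclude but that the application in Proposition \ref{bij} actually uses.
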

\begin{proof}
    This follows exactly as in \cite[Proposition 9]{NRSS}. We only sketch their proof here for completeness. The idea there is that one can label the generators for the augmented DGA of the $k+1$ copy  $ \alg(\Lambda^{(k+1)})^{\bm{\rho}}$ with respect to increasing height $h(y)<h(x)<h(a_1)<\ldots< h(a_n)$ (here we are purposefully omitting superscripts on our generators) to get the usual filtration for the $k+1$ copy.
    This then translates to the statement that $m_k(\mathcal{F}^{i_k}\Hom(\rho_k,\rho_{k+1})\otimes\cdots \otimes \Hom(\rho_1,\rho_2))\subset \mathcal{F}^I\Hom(\rho_1,\rho_{k+1})$ with $I=\max\{i_1,\ldots,i_k\}$. From 
    here one improves this for $k\neq 2$, by examining the (augmented) DGA of the $k+1$-copy and noting that when $s^{1,k+1}\in \mathcal{F}_p\alg(\Lambda_f^{k+1})^{\bm{\rho}} \setminus \mathcal{F}_{p-1}\alg(\Lambda_f^{k+1})^{\bm{\rho}}$ the only generators in $\mathcal{F}_p\alg(\Lambda_f^{k+1})^{\bm{\rho}} \setminus \mathcal{F}_{p-1}\alg(\Lambda_f^{k+1})^{\bm{\rho}}$ appearing in $\dd^{\bm{\rho}}(s)$ appear in quadratic terms by Proposition 2.4.
\end{proof}
\begin{proposition}\label{bij}
    Suppose $\rho$ and $\rho_0$ are isomorphic in $\rep_n(\Lambda,\F)$ then there is a bijection from $\Hom^0(\rho,\rho_0)^\times$ to $\Hom^0(\rho,\rho)^\times$.
\end{proposition}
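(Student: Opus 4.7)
The plan is to exhibit an explicit bijection realized by left-multiplication with a chosen isomorphism. Since $\rho \cong \rho_0$, Proposition \ref{equiv} supplies a cocycle $\beta_0 \in \Hom^0(\rho_0, \rho)^\times$ of the form $\beta_0 = Ny^\vee + B$ with $N \in \GL_n(\F_q)$ and $B \in \Span_{\Mat_n(\F_q)}\{a_i^\vee : |a_i| = -1\}$. My candidate bijection will be
\[
\Psi : \Hom^0(\rho, \rho_0)^\times \to \Hom^0(\rho, \rho)^\times, \qquad \Psi(\alpha) := -m_2(\beta_0, \alpha).
\]
Well-definedness is immediate: the $A_\infty$ relation $m_1 m_2(\beta_0, \alpha) + m_2(m_1\beta_0, \alpha) + m_2(\beta_0, m_1\alpha) = 0$ shows $\Psi(\alpha)$ is a cocycle, and if $\alpha = My^\vee + A$ then the formula in Lemma \ref{charm2} gives $\Psi(\alpha) = MNy^\vee + A\cdot N + MB - m_2(B, A)$, whose $y^\vee$-coefficient $MN$ is invertible; Proposition \ref{equiv} then supplies strict $m_2$-inverses.

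For injectivity, I would use the height filtration. If $\Psi(\alpha) = \Psi(\alpha')$ and $v := \alpha - \alpha' = Vy^\vee + W$ with $W = \sum_i W_i a_i^\vee$, then $m_2(\beta_0, v) = 0$. Reading the $y^\vee$-coefficient off the explicit formula forces $V = 0$, after which each $a_i^\vee$-coefficient reduces to $W_i \cdot N = (m_2(B, W))_i$. Lemma \ref{charm2}'s filtration property makes $(m_2(B, W))_i$ depend only on $W_j$ with $j < i$, so an induction starting from the lowest-height Reeb chord forces every $W_i = 0$.

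For surjectivity, given $\gamma = Py^\vee + C$ I would set $M := PN^{-1}$ and solve inductively for $A = \sum_i A_i a_i^\vee$ via $A_i = (C_i - MB_i + (m_2(B, A))_i) \cdot N^{-1}$, well-posed by the same filtration property. The resulting $\alpha := My^\vee + A$ satisfies $-m_2(\beta_0, \alpha) = \gamma$ by construction; what remains is to verify that $\alpha$ is actually a cocycle in $\Hom(\rho, \rho_0)$. For this I would apply the $A_\infty$ relation to $0 = m_1\Psi(\alpha)$ to obtain $m_2(\beta_0, m_1 \alpha) = 0$ in $\Hom^1(\rho, \rho)$, and then rerun the injectivity argument in degree one — with $x^\vee$ replacing $y^\vee$ and the degree-zero Reeb duals replacing the degree-$(-1)$ ones — to conclude $m_1 \alpha = 0$. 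Invertibility of $\alpha$ is inherited from $M \in \GL_n(\F_q)$ via Proposition \ref{equiv}, and Lemma \ref{uniquerep} confirms that the cocycle condition pairs $(M, A)$ with exactly the chosen $\rho_0$.

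The main obstacle I anticipate is the degree-one filtration argument feeding into surjectivity: unlike the degree-zero case, it requires tracking the interaction of $m_2$ with the generator $x^\vee$ alongside the degree-zero Reeb duals, and verifying that the unit property $m_2(Ny^\vee, \cdot) = -(\cdot)\cdot N$ interacts correctly with the filtration so that the induction still goes through. The matrix bookkeeping in Lemma \ref{charm2} — in particular distinguishing left- from right-multiplication by coefficients in expressions like $A\cdot N$ versus $MB$ — is also where sign or index errors would most easily creep in, so that is where I would spend the most care.
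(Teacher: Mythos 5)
Your strategy is genuinely different from the paper's. The paper fixes $f\in \Hom^0(\rho,\rho_0)^\times$ with a left inverse $g$, forms $M_f(\alpha)=m_2(f,\alpha)$ and $M_g(\alpha)=m_2(g,\alpha)$, and uses the $A_\infty$ relation $M_g\circ M_f(\alpha)=\alpha+m_1(m_3(g,f,\alpha))+m_3(g,f,m_1(\alpha))$ together with Proposition \ref{filtlemma} to conclude that $M_g\circ M_f$ is unipotent with respect to the filtered basis, hence that \emph{both} $M_f$ and $M_g$ are injective; since the two sets are finite, injections in both directions already force equal cardinality, and no surjectivity of any single map is ever proved. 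Your $\Psi=-m_2(\beta_0,\cdot)$ is essentially the paper's $M_g$, but you replace the $m_3$-based unipotence argument by the explicit degree-zero formula of Lemma \ref{charm2} (which does give a cleaner, more elementary injectivity proof in degree zero), and you then take on the extra burden of proving surjectivity directly by a triangular solve. Your formulas check out: $\Psi(\alpha)=MNy^\vee+A\cdot N+MB-m_2(B,A)$ is the correct expansion, the $y^\vee$-coefficient argument forces $V=0$, and the height-filtration induction on the $W_i$ and $A_i$ is well-posed by the first bullet of Lemma \ref{charm2}.

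The one genuine gap is exactly the step you flag: verifying that your inductively constructed preimage $\alpha$ is a cocycle requires knowing that $\xi\mapsto m_2(\beta_0,\xi)$ is injective on $\Hom^1(\rho,\rho_0)$, and the lemmas you invoke do not supply this. The last bullet of Lemma \ref{charm2} is stated only for degree-zero elements ($y^\vee$ and duals of degree-$(-1)$ Reeb chords), and the strict unit property $m_2(y^\vee,\xi)=-\xi$ is stated only for the identity coefficient; what you actually need is $m_2(Ny^\vee,\xi)=-\xi\cdot N$ for $\xi=Qx^\vee+\sum R_ia_i^\vee$ of degree one, after which the same $x^\vee$-coefficient-then-induction argument closes the loop. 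This identity is true (it follows from the $k$-copy differential, since the only quadratic words involving a $y$ in the second tensor slot come from $XY$ and $A_\ell Y$, each contributing right multiplication by the $y^\vee$-coefficient), and indeed the paper itself uses it without comment in the proof of Proposition \ref{equiv}, where $m_2(My^\vee,m_1(\beta))=-m_1(\beta)\cdot M$ is applied to the degree-one element $m_1(\beta)$. So the gap is fillable, but as written your proof is incomplete at precisely the point where it departs from needing only the stated lemmas; the paper's two-sided-injection argument is designed to sidestep this entire issue.
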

\begin{proof}
    We repeat the argument in \cite[Proposition 10]{NRSS} with a mild alteration to account for the $m_k$ operations failing to be $\Mat_n(\F_q)$-multilinear. Fix $f\in \Hom^0(\rho,\rho_0)^\times$ and a left inverse $g\in \Hom^0(\rho_0,\rho)^\times$ for $f$. The define $M_g: \Hom^0(\rho,\rho_0)\to \Hom^0(\rho,\rho)$ and $M_f: \Hom^0(\rho,\rho)\to \Hom^0(\rho,\rho_0)$ by $M_g(\alpha)=m_2(g,\alpha)$ and $M_f(\alpha)=m_2(f,\alpha)$. One can compute using the $A_\infty$ relations that \[M_g\circ M_f(\alpha)=\alpha+m_1(m_3(g,f,\alpha))+m_3(g,f,m_1(\alpha))\]

    We now apply Proposition \ref{filtlemma}, to see that the matrix of $M_g\circ M_f$ with respect to the basis 
    \begin{align*}
        \{E_{ij}y^\vee,E_{ij}a_1^\vee,\ldots,E_{ij}a_m^\vee\}\setminus\{E_{ij}a_k^\vee: |a_k^\vee|\neq0\}
    \end{align*}
    is of the form $I+N$ with $N$ strictly lower triangular. Here, $E_{ij}$ denotes the $n\times n$ matrix with $(i,j)$-entry equal to $1$ and zeros elsewhere, and the basis is ordered lexographically by Reeb chord, followed by $i$, then $j$. This implies that $M_g\circ M_f$ is injective and so $M_f$ is injective. A similar argument shows $M_g$ is injective. 

    It suffices to show $M_f(\Hom^0(\rho,\rho_0)^\times)\subset \Hom^0(\rho,\rho)^\times$. Let $\beta\in \Hom^0(\rho,\rho_0)^\times$. Then
    $m_1(M_f(\beta))=m_1(m_2(f,\beta))=m_2(m_1(f),\beta)+(-1)^{|f|}m_2(f,m_1(\beta))=0$
    where at the last equality we are using the fact that $f$ and $\beta$ are cocycles.
    To see that $M_f(\beta)$ has left and right inverses we recall that having left and right inverses is equivalent to requiring  the coefficient of $y^\vee$ to be invertible. Since $f$ and $\beta$ by definition have invertible $y^\vee$ coefficient we see by Equation \ref{m22} in Proposition \ref{equiv} that so does $M_f(\beta)$. Therefore, $M_f:\Hom^0(\rho,\rho_0)^\times\to \Hom^0(\rho,\rho)^\times$ is an injection and similarly so is $M_g: \Hom^0(\rho,\rho_0)^\times\to \Hom^0(\rho,\rho)^\times$. It follows that that there is a bijection from $\Hom^0(\rho,\rho_0)^\times$ to $\Hom^0(\rho,\rho)^\times$.
\end{proof}

\begin{corollary}\label{cor:countrep}
    Fix $\rho:\alg\to \Mat_n\F_q$ and let $B^0(\rho,\rho)$ denote the coboundaries in $\Hom^0(\rho,\rho)$ and $|\Aut(\rho)|$ denote the number of units in $H^*\Hom(\rho,\rho)$. Then
    \[\vert\{\rho_0:\alg\to \Mat_n(\F_q)| \rho\cong \rho_0\}\vert=\frac{|\GL_n(\F_q)|}{|\Aut(\rho)|}q^{\dim_{\F_q}\Hom^0(\rho,\rho)-\dim_{\F_q}B^0(\rho,\rho)-n^2}\]
\end{corollary}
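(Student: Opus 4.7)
The plan is to string together the previous three results---Corollary \ref{countunits}, Proposition \ref{bij}, and the characterization of units in $H^0\Hom(\rho,\rho)$---and then translate the exponent $n^2r$ into the dimension data appearing in the statement.

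First I would observe that Corollary \ref{countunits} rewrites as
\[
q^{n^2r}|\GL_n(\F_q)|=\sum_{\rho_0:\alg\to\Mat_n(\F_q)}|\Hom^0(\rho,\rho_0)^\times|=\sum_{\rho_0\cong\rho}|\Hom^0(\rho,\rho_0)^\times|,
\]
where the second equality uses that $\Hom^0(\rho,\rho_0)^\times$ is empty unless $\rho\cong\rho_0$ (any element of it exhibits the isomorphism). By Proposition \ref{bij}, each summand on the right equals the single number $|\Hom^0(\rho,\rho)^\times|$, so
\[
|\{\rho_0:\rho\cong\rho_0\}|=\frac{q^{n^2r}|\GL_n(\F_q)|}{|\Hom^0(\rho,\rho)^\times|}.
\]

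Next I would identify $|\Hom^0(\rho,\rho)^\times|$ with $|\Aut(\rho)|\cdot|B^0(\rho,\rho)|$. For this I would use the fact, already extracted during the proof of Proposition \ref{equiv}, that a cocycle $\alpha=My^\vee+A\in\Hom^0(\rho,\rho)$ has two-sided $m_2$-inverses if and only if $M\in\GL_n(\F_q)$. Since Lemma \ref{charm1} says $m_1$ sends any grading $-1$ element into $\Span_{\Mat_n(\F_q)}\{a_j^\vee\}$, every coboundary in degree $0$ has trivial $y^\vee$-coefficient; so the $y^\vee$-coefficient descends to a well-defined map $\pi:H^0\Hom(\rho,\rho)\to \Mat_n(\F_q)$. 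The computation in Lemma \ref{charm2} (the fourth bullet) shows $\pi([\alpha]\cdot[\beta])=-\pi([\beta])\pi([\alpha])$, so $[\alpha]$ is a unit in the cohomology algebra if and only if $\pi([\alpha])$ is invertible. Thus $\Hom^0(\rho,\rho)^\times$ is precisely the preimage of $\Aut(\rho)\subset H^0\Hom(\rho,\rho)$ under the canonical surjection with fiber $B^0(\rho,\rho)$, giving $|\Hom^0(\rho,\rho)^\times|=|\Aut(\rho)|\cdot |B^0(\rho,\rho)|$.

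Finally I would translate $n^2r$. The degree-zero part of $\Hom(\rho,\rho)$ has basis $\{E_{ij}y^\vee\}_{i,j}\cup\{E_{ij}a_k^\vee:|a_k|=-1\}$, so $\dim_{\F_q}\Hom^0(\rho,\rho)=n^2(1+r)$, hence $n^2r=\dim_{\F_q}\Hom^0(\rho,\rho)-n^2$. Substituting everything back gives exactly
\[
|\{\rho_0:\rho\cong\rho_0\}|=\frac{|\GL_n(\F_q)|}{|\Aut(\rho)|}\,q^{\dim_{\F_q}\Hom^0(\rho,\rho)-\dim_{\F_q}B^0(\rho,\rho)-n^2}.
\]

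The only subtle step, and the one I would treat most carefully, is the identification $\Hom^0(\rho,\rho)^\times=\pi^{-1}(\Aut(\rho))$: one direction is the Lemma \ref{charm2} multiplicativity, but the reverse direction---that invertibility of the $y^\vee$-coefficient actually produces honest two-sided inverses in $\Hom^0$ rather than just at the level of cohomology---relies on the inductive construction of the inverse used in the proof of Proposition \ref{equiv}, and should be invoked explicitly.
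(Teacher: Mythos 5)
Your proposal is correct and follows essentially the same route as the paper's proof: Corollary \ref{countunits} plus Proposition \ref{bij} give $|\{\rho_0:\rho\cong\rho_0\}|\cdot|\Hom^0(\rho,\rho)^\times|=q^{n^2r}|\GL_n(\F_q)|$, then one uses $|\Hom^0(\rho,\rho)^\times|=|\Aut(\rho)|\cdot|B^0(\rho,\rho)|$ and $n^2r=\dim_{\F_q}\Hom^0(\rho,\rho)-n^2$. The paper asserts the identity $|\Hom^0(\rho,\rho)^\times|=|\Aut(\rho)|\cdot|B^0(\rho,\rho)|$ without elaboration, so your careful justification of it (coboundaries have vanishing $y^\vee$-coefficient, and invertibility of the $y^\vee$-coefficient characterizes units both in $\Hom^0$ and in $H^0$ via Proposition \ref{equiv}) is a welcome addition rather than a deviation.
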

\begin{proof}
    Corollary \ref{countunits} followed by Proposition \ref{bij} implies that
    \[q^{n^2r}|\GL_n(\F_q)|=\left\vert\bigsqcup_{\rho_0\cong\rho}\Hom^0(\rho,\rho_0)^\times\right\vert=\vert\{\rho_0:\rho\cong \rho_0\}\vert\cdot\vert\Hom^0(\rho,\rho)^\times\vert\] were $r$ is the number of Reeb chords in DGA degree $-1$. Now since $r=\textrm{rank}_{\Mat_n(\F_q)}\Hom^0(\rho,\rho)-1$ and \[|\Hom^0(\rho,\rho)^\times|=|\Aut(\rho)|\cdot |B^0(\rho,\rho)|=|\Aut(\rho)|\cdot q^{\dim_{\F_q}B^0(\rho,\rho)}\] 
    we get 
    \begin{align*}\vert\{\rho_0:\alg\to \Mat_n(\F_q)| \rho\cong \rho_0\}\vert&=\frac{|\GL_n(\F_q)|}{|\Aut(\rho)|}q^{n^2( rk\Hom^0(\rho,\rho)-1)}q^{-\dim_{\F_q}B^0(\rho,\rho)}\\
    &=\frac{|\GL_n(\F_q)|}{|\Aut(\rho)|}q^{\dim_{\F_q}\Hom^0(\rho,\rho)-\dim_{\F_q}B^0(\rho,\rho)-n^2}
    \end{align*}
\end{proof}
Let $r_i$ denote the number of Reeb chords of $\Lambda$ in grading $i$. We define the shifted Euler characteristic $\chi_*$ by \begin{equation}\label{eq: chi}
    \chi_*:=\sum_{i\geq0}(-1)^ir_i+\sum_{i<0}(-1)^{i+1}r_i\end{equation}
\begin{theorem}\label{thm:counting}
    For a Legendrian knot $\Lambda$ in $\R^3$ we have
    \[\#\pi_{\geq0}\rep_n(\Lambda,\F_q)^*=q^{n^2(\frac{tb-\chi_*}{2})}|\GL_n(\F_q)|^{-1}\cdot|\{\rho:\alg\to \Mat_n(\F_q)\}|\] where \[\chi_*:=\sum_{i\geq0}(-1)^ir_i+\sum_{i<0}(-1)^{i+1}r_i\]
\end{theorem}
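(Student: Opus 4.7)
The natural plan is to partition the set of all representations into conjugate DGA homotopy classes, which by Proposition \ref{equiv} are the isomorphism classes in $H^*\rep_n$, and to evaluate the homotopy cardinality term by term. Applying Corollary \ref{cor:countrep} to rewrite $|\{\rho:\alg\to\Mat_n(\F_q)\}|$ as $\sum_{[\rho]}|\{\rho_0\cong\rho\}|$, the theorem reduces, class by class, to the identity
\[\prod_{i\geq 1}|H^{-i}\Hom(\rho,\rho)|^{(-1)^{i+1}}=q^{n^2(tb-\chi_*)/2+\dim_{\F_q}\Hom^0(\rho,\rho)-\dim_{\F_q}B^0(\rho,\rho)-n^2}.\]

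The next step is a standard Euler-characteristic telescoping. Writing $L_j:=\dim_{\F_q}\Hom^j(\rho,\rho)$ and $B_j:=\dim_{\F_q}B^j(\rho,\rho)$, rank-nullity applied to $m_1:\Hom^j\to\Hom^{j+1}$ yields $\dim H^j=L_j-B_j-B_{j+1}$. Substituting this into the alternating sum $\sum_{j\leq -1}(-1)^{j+1}\dim H^j$ and reindexing, all boundary-dimension terms cancel in pairs via $(-1)^{j+1}+(-1)^j=0$ except for the single term $B_0$, which precisely absorbs the corresponding term on the right-hand side. What remains is
\[\sum_{j\leq -1}(-1)^{j+1}L_j=n^2(tb-\chi_*)/2+L_0-n^2.\]

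The third step identifies the $L_j$'s in terms of the Reeb-chord counts. Since $\Hom(\rho,\rho)$ is a free $\Mat_n(\F_q)$-module on the $a_i^\vee$'s together with $x^\vee$ (in grading $1$) and $y^\vee$ (in grading $0$), one has $L_j=n^2r_{j-1}$ for $j\leq -1$ and $L_0-n^2=n^2r_{-1}$. Substituting and reindexing with $k=j-1$ turns the displayed identity into the purely numerical statement
\[\sum_{k\leq -2}(-1)^k r_k-r_{-1}=\frac{tb-\chi_*}{2},\]
and a short manipulation using the definition of $\chi_*$ shows this is equivalent to
\[\sum_{i\in\Z}(-1)^i r_i=tb(\Lambda).\]

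The main obstacle is this last identity. It is the classical chain-level Euler-characteristic formula for a connected Legendrian knot with $r(\Lambda)=0$ and a single basepoint: by invariance of the Euler characteristic under quasi-isomorphism, the alternating sum of Reeb-chord counts equals $\chi(LCH_*^\epsilon(\Lambda))=tb(\Lambda)$ for any augmentation, and one can check the identity directly on simple examples such as the standard Legendrian unknot (where $r_1=1$, $tb=-1$) and the right-handed trefoil (where $r_0=3$, $r_1=2$, $tb=1$). Once this classical identity is cited, the class-by-class reduction assembles into the stated formula for $\#\pi_{\geq 0}\rep_n(\Lambda,\F_q)^*$.
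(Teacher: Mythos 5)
Your proposal is correct and follows essentially the same route as the paper: decompose $|\{\rho:\alg\to\Mat_n(\F_q)\}|$ into isomorphism classes via Corollary \ref{cor:countrep}, telescope the alternating sum of $\dim_{\F_q}\Hom^i(\rho,\rho)$ against the cohomology and coboundary dimensions, and finish with the identity $\sum_i(-1)^ir_i=tb(\Lambda)$, which is exactly the content of the paper's unproved Equation \ref{thmtb}. The only soft spot is your justification of that last identity --- the appeal to $\chi(LCH_*^\epsilon)$ presupposes an augmentation and checking examples is not a proof; the standard argument is diagrammatic (in the Lagrangian projection a Reeb chord of even degree is a positive crossing and one of odd degree is negative, while $tb$ is the writhe) --- but since the paper likewise asserts this fact without proof, your argument is on the same footing.
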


\begin{proof}
    Let $m_1^i:\Hom^i(\rho,\rho)\to \Hom^{i+1}(\rho,\rho)$, and observe that \begin{align*}
        \dim_{\F_q}\Hom^i(\rho,\rho)&=\dim_{\F_q}\ker m_1^i+\dim_{\F_q}\im m_1^i\\
        &=\dim_{\F_q}\ker m_1^i+\dim_{\F_q}\im m_1^i-\dim_{\F_q}\im m_1^{i-1}+\dim_{\F_q}\im m_1^{i-1}\\
        &=\dim_{\F_q} H^i\Hom(\rho,\rho)+\dim_{\F_q}B^{i+1}(\rho,\rho)+\dim_{\F_q}B^i(\rho,\rho)
    \end{align*}
    Therefore,
    \begin{equation}\label{thmeq1}
        \sum_{i<0}(-1)^i\dim_{\F_q}\Hom^i(\rho,\rho)=-\dim_{\F_q}B^0(\rho,\rho)+\sum_{i<0}(-1)^i\dim_{\F_q}H^i\Hom(\rho,\rho).
        \end{equation}
        Meanwhile, we have \begin{equation}\label{thmchi}
            n^2\chi_*=-2n^2+\sum_{i\leq0}(-1)^i\dim_{\F_q}\Hom^i(\rho,\rho)-\sum_{i>0}(-1)^i\dim_{\F_q}\Hom^i(\rho,\rho)\end{equation} and \begin{equation}\label{thmtb}
            -n^2tb=\sum_i(-1)^i\dim_{\F_q}\Hom^i(\rho,\rho)
            \end{equation} where the $-2n^2$ in Equation \ref{thmchi} accounts for extra contributions coming from $y^\vee$ and $x^\vee$. Combining the above equations we see that \begin{align*}
                n^2\left(\frac{\chi_*-tb}{2}\right)\overset{\ref{thmchi}\&\ref{thmtb}}&{=}-n^2+\dim_{\F_q}\Hom^0(\rho,\rho)+\sum_{i<0}(-1)^i\dim_{\F_q}\Hom^i(\rho,\rho)\\
                &\overset{\ref{thmeq1}}{=}-n^2+\dim_{\F_q}\Hom^0(\rho,\rho)-\dim_{\F_q}B^0(\rho,\rho)+\sum_{i<0}(-1)^i\dim_{\F_q}H^i\Hom^i(\rho,\rho)
                \end{align*}
           Therefore the statement of Corollary \ref{cor:countrep} can be rephrased as
           \begin{align*}
               |\{\rho:\rho\cong\rho_0\}|&=\frac{|\GL_n(\F_q)|}{|\Aut(\rho)|}q^{n^2\left(\frac{\chi_*-tb}{2}\right)}q^{-\sum_{i<0}(-1)^i\dim_{\F_q}H^i\Hom^i(\rho,\rho)}\\
               &=\frac{|\GL_n(\F_q)|}{|\Aut(\rho)|}q^{n^2\left(\frac{\chi_*-tb}{2}\right)} \cdot \frac{|H^{-1}\Hom(\rho,\rho)||H^{-3}\Hom(\rho,\rho)|\cdots}{|H^{-2}\Hom(\rho,\rho)||H^{-4}\Hom(\rho,\rho)|\cdots}.
               \end{align*}
            Rearranging and summing over isomorphism class of representations gives us 
            \begin{align*}
                \#\pi_{\geq0}\rep_n(\Lambda,\F_q)^*&=\sum_{[\rho]\in\rep_n(\Lambda,\F_q)/\sim}\frac{1}{|\Aut(\rho)|}\cdot\frac{|H^{-1}\Hom(\rho,\rho)||H^{-3}\Hom(\rho,\rho)|\cdots}{|H^{-2}\Hom(\rho,\rho)||H^{-4}\Hom(\rho,\rho)|\cdots}\\
                &=q^{n^2(\frac{tb-\chi_*}{2})}|\GL_n(\F_q)|^{-1}\cdot|\{\rho:\alg\to \Mat_n(\F_q)\}|
            \end{align*} as desired.
\end{proof}

We now recall the definition of an $m$-graded normal ruling. A normal ruling of $\Lambda$ is a decomposition of the front diagram into a collection of simple closed curves such that each simple closed curve has exactly one corner at a right cusp and exactly one corner at a left and possibly has corners at crossings (called \emph{switches}) subject to the normality condition in Figure \ref{fig:normality}.

\begin{figure}[ht]
    \centering
    \includegraphics{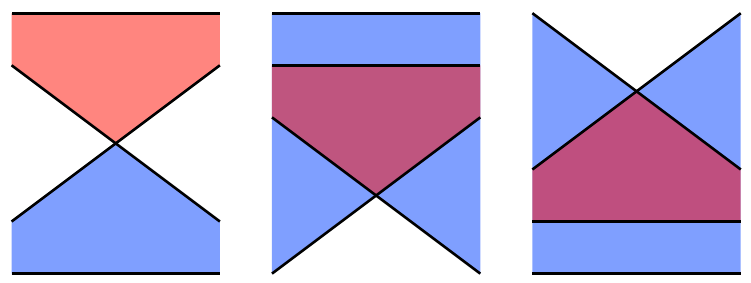}
    \caption{The normality condition says that at a switch the interiors of the corresponding simple closed curves are locally disjoint or nested.}
    \label{fig:normality}
\end{figure}

Further suppose that $\Lambda$ comes equipped with a $\Z/m$-valued Maslov potential\footnote{A Maslov potential is a locally constant function on the front diagram of $\Lambda$ that increments by one as one traverses up through a cusp} for $m|2r(\Lambda)$. If all switches of a ruling $\tau$ occur between strands of equal Maslov potential (mod $m$) then $\tau$ is called $m$-\emph{graded}.
\begin{definition}\label{ruling}
The $m$-graded ruling polynomial of $\Lambda$ is defined by \[R^m_{\Lambda}(z)=\sum_{\tau}z^{|\mathrm{switches}(\tau)|-|\mathrm{right \;cusps}(\tau)|}\]
\end{definition}
\begin{definition}\label{coloredruling}\cite{LR}
    For $m\neq 1$, the $m$-graded $n$-colored ruling polynomial is defined by \[R^m_{n,\Lambda}(q)=\frac{1}{c_n}\sum_{\beta\in S_n}q^{\ell(\beta)/2}R^m_{S(\Lambda,\beta)}(z)\vert_{z=q^{1/2}-q^{-1/2}}\]
    where 
        $S(\Lambda,\beta)$ denotes the satellite construction of $\Lambda$ by a positive permutation braid\footnote{A positive permutation braid is the unique Legendrian $n$-tangle characterized  by requiring the front projection of $\beta$ has no cusps, and (labeling strands top to bottom) if $i<j$ then then $\beta$ has no (resp. exactly one) crossings between strands labeled $i$ and $j$ at $x=0$ if $\beta(i)<\beta(j)$ (respectively $\beta(i)>\beta(j)$).}  $\beta\subset J^1[0,1]$ with Maslov potential 0 on all strands of $\beta$;
        $\ell(\beta)$ is the length of $\beta$; and 
        \[c_n=(q^{1/2})^{n(n-1)/2}\prod_{i=1}^n\frac{q^{i/2}-q^{-i/2}}{q^{1/2}-q^{-1/2}}.\]
    
\end{definition}
In \cite{LR}, Legendrian invariant $m$-graded total $n$-dimensional representation numbers were defined for any $m\geq0$ in full generality. We record their definition (in our restricted context).

\begin{definition}
    Let $\Lambda\subset J^1\R$ have $r(\Lambda)=0$. Then the total $0$-graded $n$-dimensional representation number is defined by \[Rep_0(\Lambda,\F_q^n)=q^{-n^2(\frac{\chi_*}{2})}|\GL_n(\F_q)|^{-1}\cdot |\{\rho:\alg\to \Mat_n(\F_q)\}|\]
\end{definition}

Combining Theorem \ref{thm:counting} with 
\begin{theorem}\cite{LR}
    Let $\Lambda\subset J^1(\R)$ be a Legendrian with $r(\Lambda)=0$, then
    \[Rep_0(\Lambda,\F_q^n)=R^0_{n,\Lambda}(q)\]
\end{theorem}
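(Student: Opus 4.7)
The plan is to reduce the count of $n$-dimensional representations $\rho:\alg_\Lambda\to \Mat_n(\F_q)$ to a weighted sum of augmentation counts for the satellites $S(\Lambda,\beta)$ as $\beta$ ranges over positive permutation braids in $S_n$, and then to apply the Henry--Rutherford theorem identifying augmentation numbers with ruling polynomials specialized at $z=q^{1/2}-q^{-1/2}$. This is the natural strategy because the target side $R^0_{n,\Lambda}(q)$ is literally defined as such a weighted sum over $S_n$.

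First I would stratify the set of representations according to the Bruhat cell of the invertible image $\rho(t)\in\GL_n(\F_q)$. The Bruhat decomposition $\GL_n(\F_q)=\bigsqcup_{\beta\in S_n}B\,\beta\,B$ partitions possible images of the basepoint generator, and the standard cell count $|B\beta B|=q^{\ell(\beta)}|B|$ will produce exactly the $q^{\ell(\beta)/2}$ weights and the normalization constant $c_n$ once combined with the $|\GL_n(\F_q)|^{-1}$ factor in the definition of $Rep_0(\Lambda,\F_q^n)$.

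Next I would establish a bijective correspondence between representations whose $t$-image lies in the $\beta$-cell and augmentations of $(\alg_{S(\Lambda,\beta)},\dd_{S(\Lambda,\beta)})$. This requires an explicit description of the satellite DGA, analogous to the $k$-copy formula in Proposition 2.4, so that the braid generators of $\beta$ precisely absorb the off-diagonal freedom in the Bruhat normal form of $\rho(t)$. After this identification, I would apply the Henry--Rutherford theorem to each satellite, converting $Aug_0(S(\Lambda,\beta),\F_q)$ into the product $q^{tb(S(\Lambda,\beta))/2}R^0_{S(\Lambda,\beta)}(z)\vert_{z=q^{1/2}-q^{-1/2}}$, and then account for how $tb$ and $\chi_*$ change under satelliting by a positive braid to cancel the $q^{-n^2\chi_*/2}$ prefactor in $Rep_0$.

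The hard part will be step two, namely the DGA-level correspondence between $\beta$-stratum representations of $\alg_\Lambda$ and augmentations of $\alg_{S(\Lambda,\beta)}$. One needs a clean presentation of the differential of $S(\Lambda,\beta)$ generalizing the matrix formulas for $\dd_{\Lambda^k_f}$ given in Proposition 2.4, a verification that the relations $\rho\circ\dd=0$ become, block-by-block under Bruhat normal form, exactly the augmentation condition on the satellite, and a careful accounting of basepoint conventions so that the $\GL_n(\F_q)$ normalization on the representation side balances the $c_n$ normalization on the ruling side. Once this correspondence is nailed down, the remaining steps are essentially bookkeeping in powers of $q$, and the theorem follows by a direct comparison with Definition \ref{coloredruling}.
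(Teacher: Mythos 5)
The paper does not prove this statement: it is quoted from \cite{LR} and used as a black box, so there is no internal proof to compare your attempt against. Judged against the actual argument in \cite{LR}, your outline is a faithful reconstruction of their strategy at the top level: they do organize the count via the Bruhat decomposition $\GL_n(\F_q)=\bigsqcup_{\beta\in S_n}B\beta B$, they do relate augmentations of $S(\Lambda,\beta)$ to representations of $\alg_\Lambda$ whose relevant invertible matrix lies in the cell indexed by $\beta$, and the weights $q^{\ell(\beta)/2}$ and the constant $c_n$ do emerge from the cell count $|B\beta B|=q^{\ell(\beta)}|B|$ combined with the normalizations on both sides. So the approach is the right one.

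That said, what you have written is a plan, and essentially all of the mathematical content sits in the step you explicitly defer. Two concrete points where the sketch would not go through as stated. First, the correspondence in your step two is not a bijection between $\beta$-stratum representations and augmentations of the satellite: an augmentation of $\alg_{S(\Lambda,\beta)}$ must assign values to all the mixed Reeb chords of the $n$-copy thickening of every crossing of $\Lambda$ as well as to the crossings of $\beta$, so each representation $\rho$ sits under a whole fiber of augmentations, and one must show that this fiber is an affine space of computable dimension; in \cite{LR} this is handled via normal forms adapted to the positive permutation braid, and it is precisely this fiber count that produces the $q$-powers cancelling the $q^{-n^2\chi_*/2}$ prefactor in $Rep_0$, which your sketch attributes vaguely to ``how $tb$ and $\chi_*$ change under satelliting.'' Second, $S(\Lambda,\beta)$ is in general an $n$-component link, so the input ``augmentation number equals ruling polynomial'' must be the multi-component version with its own normalization, not the knot statement quoted in the introduction. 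Neither issue is fatal --- both are resolved in \cite{LR} --- but as written your proposal asserts the theorem's hardest content rather than proving it.
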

one gets
\begin{corollary}For a Legendrian knot $\Lambda\subset \R^3$ with $r(\Lambda)=0$, the $n$-colored ruling polynomial of $\Lambda$ is defined by
    \[\#\pi_{\geq0}\rep_n(\Lambda,\F_q)^*=q^{n^2tb/2}R^0_{n,\Lambda}(q).\] 
\end{corollary}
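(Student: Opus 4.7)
The plan is to combine the three inputs that have just been assembled: the counting formula of Theorem \ref{thm:counting}, the definition of the total $0$-graded $n$-dimensional representation number $Rep_0(\Lambda,\F_q^n)$, and the identification $Rep_0(\Lambda,\F_q^n)=R^0_{n,\Lambda}(q)$ from \cite{LR}. Since all of the geometric and homological content has already been packaged into these statements, the remaining task is essentially algebraic bookkeeping of the exponent of $q$.

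First I would start from the right-hand side of Theorem \ref{thm:counting},
\[
\#\pi_{\geq0}\rep_n(\Lambda,\F_q)^* = q^{n^2(tb-\chi_*)/2}\,|\GL_n(\F_q)|^{-1}\cdot|\{\rho:\alg\to \Mat_n(\F_q)\}|,
\]
and split the exponent as $n^2(tb-\chi_*)/2 = n^2 tb/2 + (-n^2\chi_*/2)$, pulling $q^{n^2 tb/2}$ out as a prefactor. Next I would observe that what remains,
\[
q^{-n^2\chi_*/2}\,|\GL_n(\F_q)|^{-1}\cdot|\{\rho:\alg\to \Mat_n(\F_q)\}|,
\]
matches the definition of $Rep_0(\Lambda,\F_q^n)$ verbatim. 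Substituting yields $\#\pi_{\geq0}\rep_n(\Lambda,\F_q)^* = q^{n^2 tb/2}\,Rep_0(\Lambda,\F_q^n)$.

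Finally, I would invoke the theorem of Leverson–Rutherford quoted just above the corollary, which identifies $Rep_0(\Lambda,\F_q^n)$ with the $0$-graded $n$-colored ruling polynomial $R^0_{n,\Lambda}(q)$, to conclude
\[
\#\pi_{\geq0}\rep_n(\Lambda,\F_q)^* = q^{n^2 tb/2}\,R^0_{n,\Lambda}(q).
\]
There is no real obstacle here; the only point that requires care is confirming the sign conventions in the exponent split line up with the definition of $Rep_0$ (in particular that the $\chi_*$ used in Theorem \ref{thm:counting} is the same shifted Euler characteristic appearing in the definition of $Rep_0$), and that the hypothesis $r(\Lambda)=0$, which is in force throughout the paper, is exactly the hypothesis required by the Leverson–Rutherford theorem.
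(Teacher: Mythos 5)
Your proposal is correct and is essentially identical to the paper's argument: the paper likewise obtains the corollary by combining Theorem \ref{thm:counting} with the definition of $Rep_0(\Lambda,\F_q^n)$ and the Leverson--Rutherford identity $Rep_0(\Lambda,\F_q^n)=R^0_{n,\Lambda}(q)$, with the only work being the factorization of the exponent $n^2(tb-\chi_*)/2$. The care you note about matching $\chi_*$ and the standing hypothesis $r(\Lambda)=0$ is exactly what makes the substitution legitimate.
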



\section{Applications to Concordance}\label{conc}

In this section we explore immediate results arising from the previous sections and the current literature.
We do this by combining results found in \cite{CorNS}, \cite{CLL+}, and \cite{Pan}. We recall some of their results.

\begin{theorem}\cite[Theorem 2.4]{CorNS}
    Suppose $L$ is an exact Lagrangian concordance from $\Lambda_-$ to $\Lambda _+$, then for any pattern $\beta\subset J^1(S^1)$ there exists an exact Lagrangian concordance from $S(\Lambda_-,\beta)$ to $S(\Lambda_+,\beta)$
\end{theorem}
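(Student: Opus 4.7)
The plan is to show that the satellite construction extends functorially to Lagrangian concordances via a Weinstein neighborhood argument. Recall that $S(\Lambda,\beta)$ is defined by embedding the pattern $\beta\subset J^1(S^1)$ into a standard contact neighborhood of $\Lambda$; by the Weinstein neighborhood theorem for Legendrians, such a neighborhood is contactomorphic to a neighborhood of the zero section in $J^1(S^1)$, so this embedding is canonical up to contact isotopy.

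To extend to concordances, I would invoke the Weinstein neighborhood theorem for exact Lagrangian submanifolds: there is a symplectomorphism from a tubular neighborhood of $L$ in $(\R\times\R^3, d(e^t\alpha))$ to a neighborhood of the zero section in $T^*L$. Since $L$ is cylindrical outside a compact set — agreeing with $\R\times\Lambda_\pm$ as $t\to\pm\infty$ — this neighborhood can be arranged so that its restrictions at the two ends coincide with the symplectizations of the Legendrian Weinstein neighborhoods of $\Lambda_\pm$. This step requires a parametric version of the Weinstein construction that is compatible with the cylindrical structure at infinity.

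Inside this $T^*L$-neighborhood, the cylindrical extension of $\beta$ at each end (namely $\R\times\beta$ inside the symplectization of $J^1(S^1)$) gives a Lagrangian pattern. I would glue these two cylindrical ends through the compact portion of $L$: because $L$ is topologically a cylinder and the Weinstein neighborhood decomposes smoothly as $L$ times a disk in the cotangent fibers, one can insert $\beta$ fiberwise along all of $L$. This produces a Lagrangian $\wt{L}\subset\R\times\R^3$ which restricts at the ends to $\R\times S(\Lambda_\pm,\beta)$.

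The main technical hurdle will be ensuring that $\wt{L}$ is \emph{exact} — that $e^t\alpha|_{\wt{L}}$ is an exact one-form. This should reduce to combining exactness of $L$ itself with exactness of $\beta\subset J^1(S^1)$ as a Legendrian, where primitives are produced via the front-function description of $\beta$. A cutoff/interpolation argument on the Liouville primitives, performed so as not to disturb the cylindrical structure at the ends, completes the construction, and yields the desired exact Lagrangian concordance from $S(\Lambda_-,\beta)$ to $S(\Lambda_+,\beta)$.
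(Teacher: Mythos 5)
Your proposal takes essentially the same route as the proof of \cite{CorNS} that the paper invokes: apply the Weinstein/Lagrangian neighborhood theorem to the concordance $L\cong S^1\times\R$, identify that neighborhood with a neighborhood of the zero section in $T^*L\cong \mathrm{Symp}(J^1(S^1))$ compatibly with the cylindrical ends, and transplant the trace $\R\times\beta$ of the pattern. The exactness hurdle you flag is milder than you suggest, since the Liouville form $e^t\alpha$ restricts to zero on the cylinder over the Legendrian $\beta$ before transplanting, and your argument nowhere uses connectedness of $\beta$ --- which is the only point the paper itself adds beyond the citation.
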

\begin{remark}
    In \cite{CorNS} they consider concordance between knots, and so they restrict to the case where $\beta$ is connected. However, their argument, which is an application of the Weinstein neighborhood theorem, pushes through to the case where $\beta$ is disconnected.
\end{remark}
\begin{theorem}\cite{CLL+}
Let $L$ be a spin exact Maslov-0 Lagrangian concordance from $\Lambda_-$ to $\Lambda_+$ then $\#\pi_{\geq0}\aug_+\Lambda_-,\F_q)^*\leq \#\pi_{\geq0}\aug_+(\Lambda_+,\F_q)^*$ and $R^0_{\Lambda_-}(q^{1/2}-q^{-1/2})\leq R^0_{\Lambda_+}(q^{1/2}-q^{-1/2})$ 
\end{theorem}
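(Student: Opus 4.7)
The plan is to leverage functoriality of the Chekanov-Eliashberg DGA under exact Lagrangian cobordisms together with Pan's $A_\infty$-refinement of this functoriality at the level of augmentation categories. First, Ekholm-Honda-K\'alm\'an functoriality (applicable since $L$ is spin, exact, and Maslov-$0$) produces a DGA map $\Phi_L:\alg_{\Lambda_+}\to\alg_{\Lambda_-}$ whose pullback sends each augmentation $\epsilon$ of $\Lambda_-$ to an augmentation $\epsilon\circ\Phi_L$ of $\Lambda_+$. The essential input, from \cite{Pan}, is that this assignment extends to an $A_\infty$-functor $F:\aug_+(\Lambda_-,\F_q)\to\aug_+(\Lambda_+,\F_q)$ that is cohomologically fully faithful on every self-hom space.

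From here the first inequality should follow by a term-by-term comparison of the two homotopy cardinality sums. For an augmentation $\epsilon$ of $\Lambda_-$, set $\epsilon':=F(\epsilon)$; cohomological fully-faithfulness of $F$ yields isomorphisms $H^k\Hom(\epsilon,\epsilon)\cong H^k\Hom(\epsilon',\epsilon')$ in every degree, and in particular $|\Aut(\epsilon)|=|\Aut(\epsilon')|$, so the weight
\[w(\epsilon):=\frac{1}{|\Aut(\epsilon)|}\cdot\frac{|H^{-1}\Hom(\epsilon,\epsilon)|\,|H^{-3}\Hom(\epsilon,\epsilon)|\cdots}{|H^{-2}\Hom(\epsilon,\epsilon)|\,|H^{-4}\Hom(\epsilon,\epsilon)|\cdots}\]
agrees with $w(\epsilon')$. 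Because fully faithful functors are conservative on isomorphisms, the induced map on equivalence classes is injective, and consequently
\[\#\pi_{\geq0}\aug_+(\Lambda_-,\F_q)^*=\sum_{[\epsilon]}w(\epsilon)=\sum_{[\epsilon]}w(F(\epsilon))\leq\sum_{[\epsilon']}w(\epsilon')=\#\pi_{\geq0}\aug_+(\Lambda_+,\F_q)^*,\]
where the right-hand sum ranges over all equivalence classes in $\aug_+(\Lambda_+,\F_q)$, which contains the image of $F$.

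The second inequality should then follow immediately by combining the first with the NRSS formula $\#\pi_{\geq0}\aug_+(\Lambda,\F_q)^*=q^{tb(\Lambda)/2}R^0_\Lambda(z)\vert_{z=q^{1/2}-q^{-1/2}}$ and the concordance invariance $tb(\Lambda_+)=tb(\Lambda_-)$, which follows from $tb(\Lambda_+)-tb(\Lambda_-)=-\chi(L)=0$; dividing both sides of the first inequality by the common factor $q^{tb/2}$ extracts the desired comparison of ruling polynomials. The principal obstacle is Pan's $A_\infty$-refinement itself: upgrading the naive pullback on objects to a cohomologically fully faithful $A_\infty$-functor requires delicate moduli-theoretic counts of holomorphic disks with boundary on $L$, and without this strong property the term-by-term weight comparison in the second paragraph would break down, since a mere injection on hom-spaces would not control the alternating product appearing in $w(\epsilon)$.
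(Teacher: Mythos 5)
This statement is imported verbatim from \cite{CLL+}; the paper offers no proof of it, so there is nothing internal to compare your argument against --- it can only be judged against the literature it cites. On those terms your sketch is essentially the standard argument and is sound, provided the one serious input is granted: Pan's theorem that an exact Maslov-$0$ \emph{concordance} induces an $A_\infty$-functor $F:\aug_+(\Lambda_-,\F_q)\to\aug_+(\Lambda_+,\F_q)$ that is cohomologically fully faithful. You correctly identify this as the crux rather than something you can derive from EHK functoriality alone. Two points of precision are worth flagging. First, for the injectivity of $F$ on equivalence classes you need full faithfulness on $\Hom(\epsilon_1,\epsilon_2)$ for \emph{pairs} of objects, not only on self-hom spaces as you state; Pan's theorem does supply this, but as written your hypothesis is slightly too weak to run the ``conservative on isomorphisms'' step. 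Second, the genus-$0$ hypothesis is doing real work: for a cobordism of positive genus the induced functor is not fully faithful and the weight comparison $w(\epsilon)=w(F(\epsilon))$ fails (the inequality then acquires a correction factor depending on $\chi(L)$), so the argument is correctly scoped to concordances but would not generalize as stated. Your deduction of the ruling-polynomial inequality from the cardinality inequality via $\#\pi_{\geq0}\aug_+(\Lambda,\F_q)^*=q^{tb(\Lambda)/2}R^0_\Lambda(q^{1/2}-q^{-1/2})$ and $tb(\Lambda_+)-tb(\Lambda_-)=-\chi(L)=0$ is correct, since $q^{tb/2}>0$.
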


\begin{corollary} \label{appl}   
If $L$ is a spin exact Lagrangian concordance form $\Lambda_-$ to $\Lambda_+$ and either $r(\Lambda_-)=0$ or $r(\Lambda_+)=0$, then $\#\pi_{\geq0}\rep_n(\Lambda_-,\F_q)^*\leq \#\pi_{\geq0}\rep_n(\Lambda_+,\F_q)^*$
and $R^0_{n,\Lambda_-}(q)\leq R^0_{n,\Lambda_+}(q)$
\end{corollary}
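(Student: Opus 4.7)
The plan is to reduce the corollary to the main theorem of this paper combined with the two cited inequalities. First, for each positive permutation braid $\beta\in S_n$, the CorNS satellite-concordance theorem produces an exact Lagrangian concordance from $S(\Lambda_-,\beta)$ to $S(\Lambda_+,\beta)$. Since the pattern $\beta$ carries Maslov potential $0$, each satellite concordance inherits the Maslov-$0$ and spin conditions from $L$ (spinness via the trivial tubular neighborhood used in the satellite construction, and the vanishing Maslov class by naturality under satelliting).

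Next, I would apply the CLL+ inequality to each satellite concordance to obtain
\[
R^0_{S(\Lambda_-,\beta)}\bigl(q^{1/2}-q^{-1/2}\bigr)\leq R^0_{S(\Lambda_+,\beta)}\bigl(q^{1/2}-q^{-1/2}\bigr)
\]
for every $\beta\in S_n$. The normalizing constant $c_n$ and the weights $q^{\ell(\beta)/2}$ appearing in Definition \ref{coloredruling} are positive for any prime power $q$, so taking the weighted sum preserves the inequality and yields $R^0_{n,\Lambda_-}(q)\leq R^0_{n,\Lambda_+}(q)$.

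For the homotopy cardinality inequality, I would combine the above with the main theorem, which asserts $\#\pi_{\geq0}\rep_n(\Lambda,\F_q)^{*}=q^{n^2\,tb(\Lambda)/2}\,R^0_{n,\Lambda}(q)$. A Maslov-$0$ exact concordance preserves the rotation number, so the hypothesis that one of $r(\Lambda_\pm)$ vanishes forces both to vanish, placing us in the setting of the main theorem. Moreover, since a concordance has Euler characteristic zero and $tb$ changes by $-\chi(L)$ under exact Lagrangian cobordism, $tb(\Lambda_-)=tb(\Lambda_+)$; the prefactors $q^{n^2\,tb/2}$ therefore cancel, and the desired inequality on homotopy cardinalities follows from the colored ruling inequality established above.

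The main obstacle I anticipate is the verification that the satellite concordance really does satisfy \emph{all} the hypotheses of the CLL+ theorem, in particular that both spinness and the vanishing Maslov class transport through the Weinstein-neighborhood construction of \cite{CorNS} for \emph{disconnected} patterns $\beta$. Once this bookkeeping is in hand, the remainder of the argument is a straightforward positive-linear-combination manipulation of established inequalities together with an application of the main theorem.
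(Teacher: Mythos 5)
Your overall strategy coincides with the paper's: satellite the concordance via the theorem of \cite{CorNS}, apply the ruling-polynomial inequality of \cite{CLL+} to each summand $R^0_{S(\Lambda_\pm,\beta)}$, sum against the positive weights $q^{\ell(\beta)/2}/c_n$ to get the colored ruling inequality, and convert to homotopy cardinalities using the main theorem together with $tb(\Lambda_-)=tb(\Lambda_+)$ (a point the paper leaves implicit and you rightly make explicit). However, there is a gap at the one step that constitutes the actual content of the paper's proof: the theorem of \cite{CLL+} requires the concordance to be Maslov-$0$, and this is \emph{not} among the hypotheses of the corollary. You write that each satellite concordance ``inherits the Maslov-$0$ \ldots{} condition from $L$,'' but $L$ was never assumed or shown to be Maslov-$0$; and your later remark that a Maslov-$0$ concordance preserves the rotation number, so that one vanishing rotation number forces the other to vanish, runs the implication in the wrong direction --- the rotation number condition is the given, and the vanishing of the Maslov class is what must be deduced from it.

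The paper supplies exactly this deduction. Since a concordance $C$ deformation retracts onto either end $\Lambda$, the map $H_1(C)\to H_1(\Lambda)$ is an isomorphism, and a commutative diagram comparing the Gauss maps of $C$ and $\Lambda$ into the Lagrangian Grassmannians identifies the Maslov class of $C$ on $H_1(C)\cong H_1(\Lambda)\cong\Z$ with the composition computing the rotation number of $\Lambda$; the hypothesis that $r(\Lambda_-)=0$ or $r(\Lambda_+)=0$ therefore forces the Maslov class of $C$ to vanish. This is what licenses the application of \cite{CLL+} to each satellite concordance (the satellites being Maslov-$0$ because $\beta$ carries the zero Maslov potential). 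With this lemma inserted, the rest of your argument --- positivity of the weights, $tb(\Lambda_+)-tb(\Lambda_-)=-\chi(L)=0$, and the cancellation of the $q^{n^2 tb/2}$ prefactors --- goes through as you describe.
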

\begin{proof}
It suffices to show that the concordance from $S(\Lambda_-,\beta)=S(\Lambda_+,\beta)$ is Maslov-0 for each positive permutation braid $\beta$. This is follows from the observation that if $\Lambda$ is a Legendrian on either end of the concordance $C$ then we have the following commutative diagram relating $\Lambda$, $C$, and the Lagrangian Grassmannians $LGr(-)$

\[ \begin{tikzcd}
H_1(C) \arrow{r}{i_*} 
\arrow["\sim", "r_*" ']{d}
& H_1(LGr(2)) \arrow{rr}{\mu}
    &&\Z \arrow[equal]{d}\\%
H_1(\Lambda) \arrow{r}{j_*}& H_1(LGr(1))\arrow[swap]{u}{(id_{LGr(1)}\times \dd_{x_2})_*}\arrow{rr}{\mu} &&\Z
\end{tikzcd}
\]
where $i$ and $j$ are inclusions and $r$ is a deformation retract. Since the bottom composition is 0 ($r(\Lambda)=0$) we see that the Maslov number of $C$ is also 0. The corollary then follows from applying the above theorems to each of $R_{S(\Lambda_\pm,\beta)}$ summands.
\end{proof}
Corollary \ref{appl}, also has some interesting consequences. For example one can see that if there are Maslov-0 concordances from $U$ to $\Lambda$ and $\Lambda$ to $U$, then the homotopy cardinality of $\Lambda$ is exactly the same as $U$. So, if $U$ is the max $tb$ unknot, then 
\[\#\pi_{\geq0}\rep_n(\Lambda,\F_q)^* =\#\pi_{\geq0}\rep_n(U,\F_q)^*=|GL_n(\F_q)|^{-1}.\]

\subsection{Further directions}
It is unclear, whether the homotopy cardinality of the representation category is a strictly stronger obstruction to concordance than that of the augmentation category. More precisely, is there a Maslov-0 concordance from $\Lambda_-$ to $\Lambda_+$ where both $\Lambda_\pm$ have no augmentations and such that the inequalities in Corollary \ref{appl} are strict for some $n>1$? An obvious strategy to tackle this would be as follows. First, find a knot $\Lambda_n$ that has no (0-graded) augmentations but has higher $n$-dimensional (0-graded) representations. Pick any two Maslov-0 concordant Legendrians $\wt{\Lambda_-}$ and $\wt{\Lambda_+}$ with 
\[\#\pi_{\geq0}\aug(\wt{\Lambda_-},\F_q)^* <\#\pi_{\geq0}\aug(\wt{\Lambda_+},\F_q)^*\]
and then one takes $\Lambda_\pm$ to be $\Lambda_n\#\wt{\Lambda_\pm}$. Some auxiliary choices need to be made in order for this strategy to be viable, for example, the choice the Maslov potentials on $\Lambda_n$ and $\wt{\Lambda_-}$ (respectively $\wt{\Lambda_+}$) should be chosen to be compatible with a Maslov potential on $\Lambda_n\#\wt{\Lambda_-}$ (respectively $\Lambda_n\# \wt{\Lambda_+}$). It is known that there are Legendrians with no augmentations but have 2-dimensional ungraded representations namely Sivek's negative torus knots. The family $\Lambda_n$ is the exact same family proposed in \cite{CDGG} which can distinguish mirrors within connected sums.

Our setting is somewhat restrictive. We have chosen to work with connected Legendrians with a single basepoint, and a single dip and concordances between such Legendrians. There are several ways this could be generalized. For example, one could ask whether similar results hold for links. In addition, Lemma \ref{cocycle} holds for $2m$-graded representations, so another direction could be to address how much of the above story works for say $2m$-graded representations. In fact, there is still some outstanding work in the augmentation setting, as it is rather unclear how to define the homotopy cardinality for the $2m$ graded augmentation category. For example, In \cite[Conjecture 19]{NRSS} they conjecture \[2\dim\Hom^{0}(\epsilon,\epsilon)-2\dim B^{0}(\epsilon,\epsilon)-1
-\chi_*(\Lambda) =
2\dim H^0\Hom(\epsilon,\epsilon)-\dim H^1\Hom(\epsilon,\epsilon).\] Assuming this, the authors deduce what the appropriate definition for the $2m$-graded homotopy cardinality should be, that is what is correct combination of cohomology groups that gives a multiple of the ruling polynomial. Under the assumption of the previous conjecture one obtains \cite[Corollary 20]{NRSS} which states that for any $m\geq 0$, the homotopy cardinality of the $2m$-graded augmentation category should be
\[
\#\pi_{\geq0}\aug_+(\Lambda,\F_q)^*:=\sum_{[\epsilon]} \frac{1}{\vert\Aut(\epsilon)\vert} \frac{|H^0\Hom(\epsilon,\epsilon)|}{|H^1\Hom(\epsilon,\epsilon)|^{1/2}} q^{(tb(\Lambda)-1)/2} = q^{tb/2} R^{2m}_\Lambda(q^{1/2} - q^{-1/2}).
\] So one might suspect under a verification of a similar conjecture to obtain a definition that has $q^{n^2tb/2}R^{2m}_{n,\Lambda}(q)$ on the right hand side.

\bibliography{output}  
\bibliographystyle{alpha}

\end{document}